\newtheorem{theorem}{Theorem}[section]
\newtheorem{lemma}[theorem]{Lemma}
\newtheorem{corollary}[theorem]{Corollary}%
\newtheorem*{propositionNoNumber}{Proposition}
\theoremstyle{definition}
\newtheorem{definition}[theorem]{Definition}
\newtheorem{example}[theorem]{Example}
\newtheorem{proposition}[theorem]{Proposition}
\theoremstyle{remark}
\newtheorem{remark}[theorem]{Remark}
\numberwithin{equation}{section}
\newcommand{\Rep}{\operatorname{Rep}}
\newcommand{\supp}{\operatorname{supp}}
\newcommand{\Ult}{\operatorname{Ult}}
\newcommand{\Spc}{\operatorname{Spc}}
\newcommand{\pfd}{\operatorname{pfd}}
\renewcommand{\dim}{\operatorname{dim}}
\newcommand{\Ker}{\operatorname{Ker}}
\newcommand{\Coker}{\operatorname{Coker}}
\newcommand{\otherwise}{\operatorname{otherwise}}
\newcommand{\cA}{\mathcal{A}}
\newcommand{\cF}{\mathcal{F}}
\newcommand{\cM}{\mathcal{M}}
\newcommand{\cP}{\mathcal{P}}
\newcommand{\cQ}{\mathcal{Q}}
\newcommand{\cS}{\mathcal{S}}
\newcommand{\cI}{\mathcal{I}}
\newcommand{\ZZ}{\mathbb{Z}}
\newcommand{\AZ}{A_{\ZZ}}
\newcommand{\Rfd}{\Rep^{\pfd}}
\newcommand{\RAZ}{\Rfd(\AZ)}
\newcommand{\VAZ}{\vec{A}_{\ZZ}}
\begin{document}

% \title[short text for running head]{full title}
\title[Prime tensor ideals in abelian categories]{Prime ideals in categories of representations of quivers of type $A$}

\author{Shunsuke Tada}
\address{}
\curraddr{}
\email{kyorochan1357@gmail.com}
\thanks{}
\keywords{quiver, quiver representation, prime tensor ideal,
tensor triangulated geometry}
\subjclass[2010]{18D10}

\begin{abstract}
{We study prime tensor ideals in tensor abelian categories of quiver representations. Specifically, we classify the prime tensor ideals in the category of representations of zigzag quivers (with bounded path length) whose vertex set is the set of integers. We show that prime tensor ideals in these categories are in canonical bijection with prime ideals of a Boolean algebra, the power set of integers.}\end{abstract}

\maketitle

\section{Introduction}\label{sec1}

We will say a quiver (directed graph) is \emph{of type $A$} if its vertex set is the set of integers, and there is a unique arrow between $i$ and $i+1$ for each $i$ like below
$$\dots
    \longleftrightarrow (-3)
    \longleftrightarrow (-2)
    \longleftrightarrow (-1)
    \longleftrightarrow 0
    \longleftrightarrow 1
    \longleftrightarrow 2
    \longleftrightarrow 3 \longleftrightarrow
     \dots ,$$
where  $ \longleftrightarrow$ means that we can choose the direction of each arrow. Such quivers' representations are used to study persistent homology which has topological information about the structure of data sets, \cite{bib10}. The representations help visualize the topological information via the ``barcode", \cite{bib20}. The theory of persistent homology is used in many areas. 
%For example, see  \cite{bib30}, \cite{bib40}, \cite{bib50} and \cite{bib60}. The article \cite{bib30} and \cite{bib40} are application to biology. The article \cite{bib50} is a application to chemistry and \cite{bib60} is an application to mathematics.
For example, see \cite{bib30} and \cite{bib40} for applications to biology, \cite{bib50} for an application to chemistry, and \cite{bib60} for an application to mathematics.

The category of representations of a quiver is a tensor (monoidal) category, that is, an abelian category with a monoidal structure. In this category, we can consider prime $\otimes$-ideals (see Definition \ref{primeTensorIdeal}) %which are defined in tensor categories 
as defined by Tobias J. Peter in \cite{bib1}. This concept is related to the tensor triangulated geometry developed by Paul Balmer \cite{bib70}. 

In this paper, we classify the prime $\otimes$-ideals of the category $\RAZ$ of point-wise finite-dimensional representations (Definition \ref{definition:pfddef}) of any quiver $\AZ$ of type $A$ which has bounded path length. That is, we have assumed that there exists $n\geq 1$ in $\mathbb{N}$ such that any path in $\AZ$ has length smaller than $n$.
%See Definition \ref{definition:pfddef} for the definition of point-wise finite-dimensional representations are defined in the . 
These prime $\otimes$-ideals are related to the prime ideals of the ring $\prod_{i\in \ZZ} (\ZZ/{2\ZZ})$. %
We will obtain the relationship between the prime $\otimes$-ideals of $\RAZ$ and the prime ideals of $\prod_{i\in \ZZ} (\ZZ/{2\ZZ})$  using the power set $\cP(\ZZ)$ of the set of integers $\ZZ$. We will obtain following theorem.

\begin{theorem}
Let $\AZ$ be a quiver of type $A$ with bounded path length. Then, there is a one-to-one correspondence between each of the three sets
\begin{enumerate}
    \item prime tensor ideals of the category $\RAZ$, 
    \item prime ideals of the ring $\prod_{i\in \ZZ} (\ZZ/{2\ZZ})$, and 
    \item prime ideals of the boolean algebra $\cP(\ZZ)$. 
    %\item ultrafilters of the boolean algebra $\cP({\ZZ})$.
\end{enumerate}
\end{theorem}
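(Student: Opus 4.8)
The plan is to route everything through the power set $\cP(\ZZ)$ and to show that a prime $\otimes$-ideal of $\RAZ$ is determined by a single invariant, namely the support of a representation. Write $S_i$ for the simple representation supported at a vertex $i$; for $A\subseteq\ZZ$ put $M_A:=\bigoplus_{i\in A}S_i$, and let $\underline k_A$ denote the representation that is $k$ on $A$, $0$ off $A$, with arrow map between $i$ and $i+1$ the identity when $\{i,i+1\}\subseteq A$ and zero otherwise; all of these lie in $\RAZ$, and $\underline k_{\ZZ}$ is the monoidal unit $\mathbb{1}$ of the pointwise tensor product. Two pointwise computations drive the argument: $M_A\otimes M_B\cong M_{A\cap B}$ (because $S_i\otimes S_j$ is $S_i$ if $i=j$ and $0$ otherwise) and $\supp(M\otimes N)=\supp M\cap\supp N$; I will also use that $M_A$ is a subobject of $M_B$ when $A\subseteq B$ and that $M_{A\cup B}$ is a subobject of $M_A\oplus M_B$. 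The equivalence (2)$\Leftrightarrow$(3) I will only recall: $\prod_{i\in\ZZ}(\ZZ/2\ZZ)$ is a Boolean ring, identified with $\cP(\ZZ)$ via indicator functions (symmetric difference $\leftrightarrow$ addition, intersection $\leftrightarrow$ multiplication), and in a Boolean ring the prime ideals are exactly the maximal ideals and coincide with the prime ideals of the underlying Boolean algebra, all of them corresponding to the ultrafilters on $\ZZ$.

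For (1)$\Leftrightarrow$(3) I will exhibit mutually inverse maps: to a prime $\otimes$-ideal $\cP$ assign $\mathfrak p_\cP:=\{A\subseteq\ZZ:M_A\in\cP\}$, and to a prime ideal $\mathfrak p$ of $\cP(\ZZ)$ assign $\cP_\mathfrak p:=\{M\in\RAZ:\supp M\in\mathfrak p\}$. The routine checks follow quickly from the identities above: $\cP_\mathfrak p$ is a Serre subcategory (support drops under subquotients and lies in the union for an extension), is a $\otimes$-ideal (as $\supp(M\otimes N)\subseteq\supp M$), is proper ($\supp\mathbb{1}=\ZZ\notin\mathfrak p$), and is prime (as $\supp(M\otimes N)=\supp M\cap\supp N$ and $\mathfrak p$ is prime); dually $\mathfrak p_\cP$ is a downward-closed, finite-union-closed subset of $\cP(\ZZ)$ which is prime (from $M_A\otimes M_B=M_{A\cap B}$ and primeness of $\cP$). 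That $\mathfrak p\mapsto\cP_\mathfrak p\mapsto\mathfrak p_{\cP_\mathfrak p}$ is the identity is immediate from $\supp M_A=A$, and $\cP\subseteq\cP_{\mathfrak p_\cP}$ is easy: if $M\in\cP$ then $M\otimes M_{\ZZ}\cong\bigoplus_{i\in\supp M}S_i^{\oplus\dim M_i}\in\cP$, and $M_{\supp M}$ is a subobject of this, so $\supp M\in\mathfrak p_\cP$.

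The substantial point — and the only place the bound on path length is used — is the reverse inclusion $\cP_{\mathfrak p_\cP}\subseteq\cP$. Suppose $\supp M\in\mathfrak p_\cP$, that is, $M_{\supp M}\in\cP$. Since every path in $\AZ$ has length $<n$, one has $\operatorname{rad}^n(N)=0$ for \emph{every} representation $N$, because the $i$-th space of $\operatorname{rad}^m(N)$ is the sum of the images of $N_p$ over paths $p$ of length $m$ ending at $i$. Hence $\underline k_{\supp M}$ carries a finite radical filtration whose successive quotients, being semisimple and pointwise at most one-dimensional, have the form $M_T$ with $T\subseteq\supp M$; these are subobjects of $M_{\supp M}$, hence lie in $\cP$, so $\underline k_{\supp M}\in\cP$ by closure under extensions, and therefore $M\cong M\otimes\underline k_{\supp M}\in\cP$ because $\cP$ is a $\otimes$-ideal. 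The same computation applied to $\mathbb{1}=\underline k_{\ZZ}$ shows that $M_{\ZZ}\in\cP$ would force $\mathbb{1}\in\cP$, so properness of $\cP$ yields $\ZZ\notin\mathfrak p_\cP$, as was needed above; composing the two bijections then also gives (1)$\Leftrightarrow$(2). I expect the fussiest part to be verifying that the successive quotients of the radical filtration of $\underline k_A$ are genuinely semisimple — this uses that between each pair of consecutive integers there is exactly one arrow — while everything else is formal once the two identities for $\otimes$ and $\supp$ are established.
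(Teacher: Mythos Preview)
Your argument is correct, and the key step --- using a finite filtration of $\underline k_A$ with semisimple subquotients to pass from $M_A\in\cP$ to $\underline k_A\in\cP$ --- is genuinely different from, and cleaner than, the paper's proof. The paper establishes the analogue of your reverse inclusion (Proposition~\ref{lemma:KsuppViffV}) by contradiction: assuming $V\in\cM$ but $K_{\supp V}\notin\cM$, it first extracts the semisimple $K'_\ZZ$ (your $M_\ZZ$) as a kernel, then uses the alternating pattern of sinks and sources forced by the bounded-path-length hypothesis to build explicit sets $B,D$ with $K_B\otimes K_D=K_C$ semisimple, and finally enlarges $B$ one vertex at a time (again using the path bound) through a chain of short exact sequences until reaching $K_\ZZ\in\cM$. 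Your radical filtration accomplishes the same enlargement in one conceptual stroke and makes transparent exactly where the bound $n$ enters (it caps the length of the filtration); the paper's approach, by contrast, is more hands-on and perhaps makes the role of sinks and sources in type $A$ more visible. One small point of care: coherent $\otimes$-ideals in the sense of Definition~\ref{primeTensorIdeal} are only assumed closed under kernels and cokernels of maps \emph{between their own objects}, not under arbitrary subobjects, so when you say ``$M_T$ is a subobject of $M_{\supp M}$, hence lies in $\cP$'' you should really invoke either the direct-summand trick (kernel of an idempotent on $M_{\supp M}$) or, more simply, the $\otimes$-ideal identity $M_T\cong M_T\otimes M_{\supp M}$.
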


In this theorem, we need the condition
$$
\text{bounded path length}
$$
to have above relations. Without it, we could not have the classification of the prime $\otimes$-ideals of the category $\RAZ$. For example, if the quiver has an infinite length path like
$$
 \dots\longrightarrow \diamond
    \longrightarrow \diamond
    \longrightarrow \diamond
    \longrightarrow \diamond
    \longrightarrow \diamond
    \longrightarrow \diamond
    \longrightarrow\dots ,
$$
then we will have an inclusion map from the prime ideals of $\cP(\ZZ)$ to the prime $\otimes$-ideals of $\RAZ$, but the map is not surjective. We will prove this in Corollary \ref{CounterExample}.

\textbf{Outline.} This paper is organized as follows. In Section \ref{sec2}, we will see the basics of the quiver representations and lattice theory. In Section \ref{sec3}, we see the classification of prime $\otimes$-ideals of $\RAZ$ and some topological properties of the spectrum of $\RAZ$.

\textbf{Acknowledgements.} I would like to thank Shane Kelly for suggesting me the problem, discussing it.

\section{Preliminaries} \label{sec2}

\subsection{Quiver and its representation}

Here are some basics about quivers and their representations. See \cite{bib6} for %
more %
details. Throughout, $K$ will be a field.

\begin{definition}[Quiver of type $A$] \label{definition:typeA} \ \\
A quiver $Q = (Q_0, Q_1, s, t)$ is a quadruple consisting
of two sets: $Q_0$ called the set of vertices and $Q_1$
called the set of arrows, and two maps $s$, $t$ : $Q_1 \to Q_0$. For every arrow $\alpha \in Q_1$, $s(\alpha) \in Q_0 $ and $t(\alpha) \in
Q_0$ are called $source$ and $target$ of $\alpha$, respectively. We usually denote the arrow $\alpha$ by $\alpha : s(\alpha) \to t(\alpha)$.

We will say a quiver is \emph{of type $A$} if its vertex set is the set of integers, and there is a unique arrow between $i$ and $i+1$ for each $i$.
\end{definition}

\begin{definition}[Point-wise finite-dimensional representations]\label{definition:pfddef}\ \\
Let $Q = (Q_0, Q_1, s, t)$ be a quiver. The category $\Rep(Q)$ of representations of $Q$ is defined as follows. An object is pair consisting of a family of $K$-vector spaces indexed by $Q_0$, and a family of $K$-homomorphisms indexed by $Q_1$:
$$V:=((V_a)_{a\in Q_0},\ (V_{\alpha} : V_{s(\alpha)} \to  V_{t(\alpha)} )_{\alpha \in Q_1} ).$$
    
Let $V=((V_a)_{a\in Q_0},(V_{\alpha})_{\alpha \in Q_1}) $ and $W=((W_a)_{a\in Q_0}, (W_{\alpha})_{\alpha \in Q_1}) $ be representations. A morphism in $\Rep(Q)$ from $V$ to $W$ is a family of $K$-homomorphisms
$(f_a : V_a \to  W_a)_{a\in Q_0}$ 
    satisfying  
          $$ f_{t(\alpha)} \circ V_{\alpha} = W_{\alpha} \circ f_{s(\alpha)} $$
for every arrow $\alpha \in Q_1$.

A representation $V$ of $\Rep(Q)$ is said to be \emph{point-wise finite-dimensional} if the dimension of vector space $\dim_K(V_a)$ is finite for every vertex $a$ in $Q$. We write $\Rfd(Q)$ for the full subcategory of $\Rep(Q)$ whose objects are point-wise finite-dimensional representations.
\end{definition}

\begin{definition}[Interval representation]\ \\
Let $Q = (Q_0, Q_1, s, t)$ be a quiver, and let $I=(I_0, I_1, s' ,t')$ be a subquiver of $Q$. Define $K_{I}\in \Rep(Q)$ by 
$$(K_{I})_a := \left \{ \begin{array}{cc} 
 K\ & (a\in I_0) \\ 
 0\ & (\text{otherwise}), 
\end{array} \right. 
$$  
$$(K_I)_{\alpha} := \left \{ \begin{array}{cc} 
id_K\ & (\alpha \in I_1) \\ 
0\ & (\text{otherwise}). 
\end{array} \right. 
$$
If $I$ is connected and convex (and therefore also full), then $K_I$ is called an \emph{interval representation}. 
\end{definition}

\begin{comment}

\begin{definition}[Point-wise finite-dimensional representations]  \label{definition:pfddef}\ \\ 
Let $Q$ be a quiver. A representation $V$ of $\Rep(Q)$ is said to be \emph{point-wise finite-dimensional} if the dimension of vector space $\dim_K(V_a)$ is finite for every vertex $a$ in $Q$. 
We write $\Rfd(Q)$ for the full subcategory of $\Rep(Q)$ whose objects are point-wise finite-dimensional representations.
\end{definition}
\end{comment}

\begin{comment}
\begin{definition}[Direct sum in the category of representations]\ \\
Let $Q = (Q_0, Q_1, s, t)$ be a quiver and  
$$
    V:=((V_a)_{a\in Q_0},\ (V_{\alpha} : V_{s(\alpha)} \to  V_{t(\alpha)} )_{\alpha \in Q_1} )\  \text{and} $$
    $$
    W:=((W_a)_{a\in Q_0},\ (W_{\alpha} : W_{s(\alpha)} \to  W_{t(\alpha)} )_{\alpha \in Q_1} )
$$
be representations. Then the \emph{direct sum} of $V$ and $W$ in $\Rep(Q)$ is defined by 
     $$V\oplus W :=((V_a \oplus W_a)_{a\in Q_0},\ (V_{\alpha}\oplus W_{\alpha} : V_{s(\alpha)}\oplus W_{s(\alpha)} \to  V_{t(\alpha)}\oplus W_{t(\alpha)})_{\alpha \in Q_1} ).$$
\end{definition}
\end{comment}

%%%%
%To show that any representation in $\RAZ$ is interval decomposable, we consider continuous quivers whose vertices are \RR$. Such quivers are defined in \cite{bib4}. We will see representations of $\RAZ$ from the viewpoint of continuous quivers and we use the interval decomposition theorem for continuous quivers which has recently been proven in the paper \cite{bib4} to show the proposition.
%%%%

\begin{theorem}\label{proposition:allAreInterval}
Let $\AZ$ be a quiver of type $A$. Then every point-wise finite-dimensional representation of $\Rep(\AZ)$ is direct sum of interval representations.
\end{theorem}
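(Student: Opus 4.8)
The plan is to bootstrap from the classical finite case. For a finite type $A_m$ quiver, Gabriel's theorem says that the indecomposable representations are exactly the interval representations $K_I$ and, being finite-dimensional, every representation is a finite direct sum of them, uniquely up to isomorphism by Krull--Schmidt. Given a point-wise finite-dimensional $V \in \Rep(\AZ)$, I would exhaust $\ZZ = \bigcup_{n \geq 0}[-n,n]$ and write $V^{(n)}$ for the restriction of $V$ to the full (convex) subquiver on $\{-n,\dots,n\}$, which is of type $A_{2n+1}$. By Gabriel, $V^{(n)} \cong \bigoplus_{I \in \mathcal{B}_n} K_I$ for a finite multiset $\mathcal{B}_n$ of intervals in $[-n,n]$. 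The key observation is that these ``barcodes'' fit into an inverse system: restricting an interval representation of a larger type $A$ quiver to a convex full subquiver yields the interval representation of the intersection (or $0$), so $V^{(n)} = (V^{(n+1)})|_{[-n,n]}$ together with uniqueness of Krull--Schmidt forces $\mathcal{B}_n = \{\, I \cap [-n,n] : I \in \mathcal{B}_{n+1},\ I \cap [-n,n] \neq \emptyset \,\}$ as multisets. Since $\dim_K V_i < \infty$, exactly $\dim_K V_i$ members of $\mathcal{B}_n$ contain any fixed vertex $i$ (independently of $n \geq |i|$), so all these multisets and the transition maps between them are finite.

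Next I would pass to the limit. Viewing the $\mathcal{B}_n$ as the levels of a finitely branching tree (an edge from $I \in \mathcal{B}_n$ to $J \in \mathcal{B}_{n+1}$ whenever $J \cap [-n,n] = I$), a K\"onig-type argument produces a multiset $\mathcal{B}$ of intervals of $\ZZ$ --- each finite, half-infinite, or all of $\ZZ$, arising as a union $\bigcup_n I_n$ along an infinite branch --- such that $\mathcal{B}_n = \{\, I \cap [-n,n] : I \in \mathcal{B},\ I \cap [-n,n] \neq \emptyset \,\}$ for every $n$. The finiteness from the previous step guarantees that each vertex lies in only finitely many members of $\mathcal{B}$, so $W := \bigoplus_{I \in \mathcal{B}} K_I$ is again point-wise finite-dimensional and satisfies $W^{(n)} \cong V^{(n)}$ for all $n$; it then remains to upgrade this to a genuine isomorphism $V \cong W$.

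This last step is the main obstacle: Krull--Schmidt yields uniqueness of the \emph{isomorphism classes} $\mathcal{B}_n$ but not canonical choices of summands, the restriction functor need not carry a chosen decomposition of $V^{(n+1)}$ onto a previously chosen one of $V^{(n)}$, and merely knowing $V^{(n)} \cong W^{(n)}$ for all $n$ does not formally imply $V \cong W$. I would resolve it in one of two ways. Either: show that internal decompositions $V^{(n)} = \bigoplus_{I \in \mathcal{B}_n} U_I^{(n)}$ with $U_I^{(n)} \cong K_I$ can be chosen compatibly with restriction --- a second compactness argument, now using finite-dimensionality of $\operatorname{End}(V^{(n)})$ to keep the tree of ``partial compatible systems'' finitely branching --- and then set $U_I := \bigcup_n U_I^{(n)}$, checking that $V = \bigoplus_{I \in \mathcal{B}} U_I$ because any $v \in V_i$ already lies in $V^{(|i|)}_i$ and hence is a finite sum of components from the $U_I^{(|i|)}$. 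Or, the route I would prefer for the write-up: invoke the Krull--Remak--Schmidt--Azumaya theorem for point-wise finite-dimensional representations (any such $V$ is a direct sum of indecomposables with local endomorphism rings), and then prove separately that an indecomposable point-wise finite-dimensional representation $V_\lambda$ of a type $A$ quiver on $\ZZ$ is an interval representation: its support is an interval of the line, since a vanishing vector space at an interior vertex would disconnect the quiver and split off a nonzero summand, and then restricting to the windows $[-n,n]$, applying Gabriel, and using that $\operatorname{End}(V_\lambda)$ is local forces each structure map between consecutive vertices of the support to be an isomorphism of one-dimensional spaces. This confines all the ``analytic'' content to a single citable decomposition theorem, after which the combinatorial bookkeeping above completes the proof.
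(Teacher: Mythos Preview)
The paper does not give its own proof of this statement; it simply cites arXiv:1507.01899 (Botnan's interval-decomposition theorem for infinite zigzag persistence modules). So the comparison is between your sketch and that reference.

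Your preferred route --- invoke a Krull--Remak--Schmidt--Azumaya-type decomposition for pfd representations, then show that every indecomposable is an interval representation --- is a reasonable architecture, but your sketch of the second step has a genuine gap. You assert that ``restricting to the windows $[-n,n]$, applying Gabriel, and using that $\operatorname{End}(V_\lambda)$ is local forces each structure map between consecutive vertices of the support to be an isomorphism of one-dimensional spaces''; however, restricting an indecomposable $V_\lambda$ to a finite window can perfectly well produce a decomposable representation, and the idempotents in $\operatorname{End}(V_\lambda^{(n)})$ coming from Gabriel's decomposition have no reason to lift to $\operatorname{End}(V_\lambda)$, so locality of the global endomorphism ring yields no immediate contradiction. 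Proving that a pfd indecomposable over an infinite zigzag is an interval representation is precisely the nontrivial content of the cited reference, and the arguments there proceed by explicitly exhibiting interval direct summands rather than by a soft Gabriel-plus-locality argument. Your alternative route has a separate gap: over an infinite field the set of internal direct-sum decompositions of $V^{(n)}$ is an infinite variety (an $\operatorname{Aut}(V^{(n)})$-orbit), so the ``tree of partial compatible systems'' is not finitely branching and K\"onig's lemma does not apply --- finite-dimensionality of $\operatorname{End}(V^{(n)})$ alone does not give the finiteness you need.
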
 

\begin{proof}
See  \cite{https://doi.org/10.48550/arxiv.1507.01899} for the proof.
\end{proof}

\subsection{Tensor category and its tensor ideal}

Here are some basics about tensor categories and their tensor ideals. See \cite{bib7}, \cite{bib1} for more details. 

\begin{definition}[Tensor category \cite{bib7}]\ \\
A \emph{tensor category} (or \emph{monoidal category}) is  $\nu = (\nu_0, \otimes, I, a, \ell, r)$ consisting of a category $\nu_0$, a functor $\otimes: \nu_0 \times \nu_0
\to \nu_0$ , an object $I$ of $\nu_0$ and natural isomorphisms $a_{XYZ} : (X \otimes Y )\otimes Z \to X \otimes(Y \otimes Z), \ 
\ell_X : I \otimes X \to X,\ r_X : X \otimes I
\to X,$ subject to two coherence axioms expressing the
commutativity of the following diagrams:
%\begin{center}
%    $ ((W \otimes X) \otimes Y) \otimes Z \overset{a}{\longrightarrow}
%     (W \otimes X) \otimes (Y \otimes Z)
%     \overset{a}{\longrightarrow}
%     W \otimes(X \otimes (Y \otimes Z)) \\
%     \searrow  \tiny{a \otimes 1} \hspace{5.0cm}  \nearrow    \tiny{1 \otimes a} \\
%     (W \otimes (X \otimes Y)) \otimes Z
%     \overset{a}{\longrightarrow}
%    W \otimes ((X \otimes Y) \otimes Z) 
%   
%     $
%\end{center}
%%
\[ \xymatrix@C=-4em@R=1em{
((W \otimes X) \otimes Y) \otimes Z \ar[rr]^a \ar[dr]^(0.7){a \otimes 1} &&
(W \otimes X) \otimes (Y \otimes Z) \ar[rr]^a &&
W \otimes (X \otimes (Y \otimes Z)) \\
& (W \otimes (X \otimes Y)) \otimes Z \ar[rr]^a &&
W \otimes ((X \otimes Y) \otimes Z) \ar[ur]_(0.7){1 \otimes a}
} \]
and
\[ \xymatrix@C=-1em@R=1em{
(X \otimes I ) \otimes Y \ar[rr]^a  \ar[dr]_(0.3){r \otimes 1}
&& X \otimes (I \otimes Y) \ar[dl]^(0.3){1 \otimes \ell}\\
& X \otimes Y 
}
\] 
%\begin{center}
%      $
%     (X \otimes I) \otimes Y
%       \overset{a}{\longrightarrow}
%       X \otimes ( I \otimes Y)$ \\
%      $ \tiny{r\otimes 1} \searrow  \quad \quad \nearrow \tiny{r\otimes 1}\\
%    X\otimes Y
%       $
%\end{center}
\end{definition}

\begin{example} \ \\
The tuple ($\RAZ,\ \otimes,\ K_{\AZ},\ a,\ \ell,\ r \ )$ is a tensor category. Let $V,\ V',\ W$ and $W'$ be objects of $\Rep(\AZ)$. We define a functor 
$$\otimes: \RAZ \times \RAZ \to \RAZ $$
by 
$$\otimes(V, W):=((V_n \otimes_K W_n)_{n\in (\AZ)_0},\ (V_{\alpha}\otimes_K W_\alpha)_{\alpha \in (\AZ)_1 } )$$ for objects and
\begin{align*}
\otimes((f,g) &: (V, W) \to (V', W')) \\
&:= (f_n \otimes g_n)_{n \in (\AZ)_0} : ((V_n \otimes_K W_n)_{n \in (\AZ)_0} \to ((V'_n \otimes_K W'_n)_{n\in (\AZ)_0}
\end{align*}
for arrows. Then, we have natural isomorphisms $a_{XYZ} :  (X \otimes Y )\otimes Z \to X \otimes(Y \otimes Z)$, $\ell_X : K_{\AZ} \otimes X \to X $ and $ r_X : X \otimes K_{\AZ}
\to X $ for any $X,\ Y$ and $Z$ in $\RAZ$. These natural isomorphisms are subject to two coherence axioms of tensor category.
\end{example} 

\begin{definition}[Prime tensor ideal \cite{bib1}]\ \\ \label{primeTensorIdeal}
Let $\cA$ be an abelian category.  A full subcategory $\cM$ of $\cA$ is called $coherent$
if 
\begin{enumerate}
    \item it contains the zero object, 
    \item it is closed under extensions and 
    \item kernels and cokernels in $\cA$ of morphisms between objects of $\cM$ are in $\cM$.
\end{enumerate}
 Suppose now that $\cA$ is a tensor abelian category. A \emph{(coherent) tensor ideal} of $\cA$ is a coherent subcategory $\cM\subset \cA$ such that $\cM \otimes \cA  \subset \cM$. A proper  ideal $\cM \subsetneq \cA$ is called a \emph{(coherent) prime ideal} if $A \otimes B \in \cM$ implies $A\in \cM$ or $B \in \cM$. 

We write the set of all the prime tensor ideals by $\Spc(\cA)$.
\end{definition}

\begin{example}\ \\
Let $Q$ be a quiver, and consider the tensor category ($\Rep^{\pfd}(Q),\ \otimes,\ K_{Q},\ a,\ \ell,\ r \ )$ and let $\cM$ be the full subcategory whose objects are $\{V \in Q : V_n =0\}$ for some vertex $n \in Q_0$. We will show that the full subcategory $\cM$ is a prime $\otimes$-ideal.

First, we show that $\cM$ is closed under kernels and cokernels.
Let $V$ and $W$ be representations in $\cM$ and   $(f_n : V_n \to  W_n)_{n\in Q}$ be a morphism from a representation $V$ to $W$. Since $V_n$ and $W_n$ are isomorphic to zero, the kernel and the cokernel of $f_n: V_n \to W_n$ is isomorphic to zero. Thus, $\Ker((f_n)_{n\in \AZ})$ and $\Coker((f_n)_{n\in \AZ})$ is in $\cM$. So the full subcategory $\cM$ is closed under kernels and cokernels.  

Next, we show that $\cM$ is closed under extension. Take an exact sequence 
$$0 \to V' \to V \to V'' \to 0$$
where $V'$ and $V''$ are objects of $\cM$. Then, there is an exact sequence of $K$-homomorphisms
\[ 0 \to V_n' \to V_n \to V_n'' \to 0. \]
Since the representations $V'$ and $V''$ are in $\cM$, we have $ V'_n = 0 = V''_n$, so $V_n$ is isomorphic to $0$. Thus the representation $V$ is in $\cM$. So the full subcategory  $\cM$ is closed under extensions. 

Then, for any representations $V\in \Rfd(Q)$ and $ W \in \cM$, we have $(V\otimes W)_n=V_n \otimes W_n=0$. Thus $\cM$ is a $\otimes$-ideal.

Finally, we show that $\cM$ is a prime $\otimes$-ideal. Let $V$ and $W$ be the representations of $\Rep^{\pfd}(Q)$. If $V\otimes W$ is in $\cM$, then we have $0=(V\otimes W)_n=V_n\otimes W_n$. Thus $V_n$ or $W_n$ is isomorphic to $0$ and therefore $V$ or $W$ is in $\cM$. 
\end{example}

\subsection{Boolean rings and Boolean algebras}

Here are some basics about Boolean rings and Boolean algebras. See \cite{bib2} for more details.

\begin{definition}[Boolean ring]\ \\
A ring $B$ with an identity is called a $Boolean$ $ring$ if $x^2 = x$ for all ${x\in B}$.
\end{definition}

%\begin{example} \ \\
%The ring $\prod_{i\in \ZZ} %(\ZZ/{2\ZZ})$ is a Boolean ring.
%\end{example}

\begin{proposition}\label{remark:18} \ \\
Any Boolean ring is a commutative ring, every prime ideal of a Boolean ring is a maximal ideal, and every residue field is isomorphic to $\ZZ/2\ZZ$.
\end{proposition}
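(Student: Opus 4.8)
The plan is to deduce all three assertions from the defining identity $x^{2}=x$ by elementary ring manipulations, treating the claims in the order in which they are stated.

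First I would prove commutativity. Applying the Boolean identity to the element $x+y$ gives $x+y=(x+y)^{2}=x^{2}+xy+yx+y^{2}=x+xy+yx+y$, so that $xy+yx=0$ for all $x,y\in B$. Specializing $y=x$ yields $2x^{2}=0$, that is, $2x=0$ for every $x\in B$; in particular $-z=z$ for all $z\in B$. Combining these two facts, $xy=-yx=yx$, so $B$ is commutative.

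Next I would treat the prime ideals by passing to quotients. Let $\mathfrak{p}\subsetneq B$ be a prime ideal. Then $B/\mathfrak{p}$ is a nonzero integral domain, and it again satisfies the identity $\bar{x}^{2}=\bar{x}$. Hence every element of $B/\mathfrak{p}$ satisfies $\bar{x}(\bar{x}-1)=0$, and since $B/\mathfrak{p}$ has no zero divisors this forces $\bar{x}=0$ or $\bar{x}=1$. Thus $B/\mathfrak{p}$ consists of exactly two elements and is therefore isomorphic to $\ZZ/2\ZZ$. In particular $B/\mathfrak{p}$ is a field, so $\mathfrak{p}$ is maximal, and the residue field at $\mathfrak{p}$ is $\ZZ/2\ZZ$; this settles the remaining two claims simultaneously.

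I do not expect a genuine obstacle here. The only points requiring slight care are the order of steps in the commutativity argument — one must extract $2x=0$ before concluding $xy=yx$, rather than the reverse — and the observation that $B/\mathfrak{p}$ is nonzero because $\mathfrak{p}$ is proper, which is what licenses the claim that it has exactly two elements instead of merely at most two.
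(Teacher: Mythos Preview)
Your argument is correct and is in fact the standard elementary proof. The paper itself does not give a proof at all but simply refers the reader to textbook exercises (Fraleigh and Atiyah--Macdonald), so your self-contained derivation from the identity $x^{2}=x$ is more complete than what the paper provides; the exercises cited there would have you carry out essentially the same computation you wrote.
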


\begin{proof}
See exercise 57 on page 194 of \cite{fraleigh2020pearson} and exercise 11.(ii) on page 11 of \cite{atiyah1994introduction}. 
\end{proof}

\begin{comment}

\begin{lemma}
Let $B$ be a Boolean ring. We can  define the partial order on $B$ by
$$a\leq b :\iff a(b+1)=0.
$$
Thus the Boolean ring $B$ has a structure of poset.
\end{lemma}

\begin{proof}
At first, we show the reflexivity. For any $a\in B$, we have $a(a+1)=a^2+a=a+a=0.$ we can show that $a+a=0$ by calculating $-a=(-a)^2=a^2=a.$ 

Next, we show the transitivity. If $a\leq b$ and $b\leq c$, then we have $a(b+1)=0$ and $b(c+1)=0$. Thus $a(c+1)=ac+a=ac-ab=ac-a(-bc)=ac(1+b)=ca(b+1)=0.$

Finally, we will show the antisymmetry. If $a\leq b$ and $b\leq a$, then we have $a(b+1)=0$ and $b(a+1)=0$. Thus we obtain $a=-ab=b.$

Thus we defined the partial order on the Boolean ring $B$.
\end{proof}
\end{comment}
\begin{definition}[Boolean algebra]\ \\
Let $B$ be a poset. The \emph{Boolean algebra} is defined to be a tuple $(B;\lor,\ \land,\ ',\ 0,\ 1)$ such that 
\begin{enumerate}[(i)]
    \item $x\lor y:=\text{sup}\{x,y\}$ and $x \land y:=\text{inf}\{x,y\}$ exist for all $x, y \in B,$
    \item $(B;\lor,\ \land)$ is distributive, i.e.,  
$$a\land(b\lor c)=(a\land b)\lor(a\land c),$$ 
    \item $0$ is the minimal element, and $1$ is the maximal element of $B$,
    \item every element $x$ admits a unique complement, i.e., an element satisfying $x \land x' = 0$ and $x \lor x' = 1$.
\end{enumerate}
\end{definition}

\begin{definition}[Ideal/prime ideal of Boolean algebra]\ \\
Let $B$ be a Boolean algebra. A non-empty
subset $J$ of $B$ is called an \emph{ideal} if
\begin{enumerate}[(i)]
\item $a,b \in J$ implies $a\lor b\in J$, 
\item $a\in B, b\in J$ and $a\leq b$ imply a $\in J$.
\end{enumerate}
An ideal is \emph{proper} if $J \subsetneq B$.
A proper ideal $J$ of $B$ is said to be a prime ideal if $a, b \in B$ and $a \land b \in J$
imply $a \in J$ or $b \in J$. We denote the sets of prime ideals by $\Spc(B).$
\end{definition}

\begin{example} \ \\
Let us write $\cP(\ZZ)$ for the power set of $\ZZ$. Then $ (\cP(\ZZ);\cap,\ \cup, \ ^c,\ \emptyset,\ \ZZ)$ is a Boolean algebra where $^c$  is the operation of taking the complement of an element. Then $\cP_a:=\{I\in \cP(\ZZ):a \notin I \} $ is the prime ideal for every $a \in \ZZ.$
\end{example}

\begin{proposition}\label{proposition:aaa} \ \\
\begin{enumerate}[(i)]
    \item Let $B$ be a Boolean algebra and define + and $\cdot$ on $B$ by
$$x+y :=(x \land y')\lor(x' \land y),
\qquad  
x\cdot y := x\land y.$$
Then ($B$; +, ·) is a Boolean ring.
    \item Let $B$ be a Boolean ring and define $\land$, $\lor$ and $'$  on $B$ by
$$x\lor y :=x+y+xy,
\qquad
x\land y:=xy,
\qquad
x':= 1+x.$$
Then $(B;\land,\lor,',0,1)$ is a Boolean algebra.
    \item The correspondence between Boolean algebras and Boolean rings established in (i) and (ii) is a bijective one.
    \item There is a one-to-one inclusion preserving correspondence between prime ideals of a Boolean algebra and the prime ideals of the associated Boolean ring. 
\end{enumerate}
\end{proposition}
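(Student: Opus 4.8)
The plan is to prove the four statements in order, putting essentially all the work into (i) and (ii) and then deducing (iii) and (iv) quickly. For (i), I would check the Boolean ring axioms directly against the Boolean algebra axioms. Commutativity of $+$ is immediate from the symmetry of the defining formula; $x+0=x$ and $x+x=0$ come from $x\land 1 = x$, $x\land x' = 0$ and $x\lor 0 = x$; and $\cdot$ is associative, commutative and unital because $\land$ is, with $x\cdot x = x\land x = x$ giving the Boolean condition. The two genuine computations are associativity of $+$ and distributivity of $\cdot$ over $+$. For associativity I would expand both $(x+y)+z$ and $x+(y+z)$ using distributivity of $\land$ over $\lor$, de Morgan's laws and $a''=a$, and check that each collapses to the symmetric expression $(x\land y'\land z')\lor(x'\land y\land z')\lor(x'\land y'\land z)\lor(x\land y\land z)$; distributivity is a shorter expansion of the same kind. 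The facts $a''=a$ and de Morgan, used here, are standard and can be cited from \cite{bib2}.

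For (ii), I would first record the standard consequences of $x^2=x$ in a unital ring: squaring $x+x$ gives $x+x=0$, and squaring $x+y$ then gives $xy=yx$, so $B$ is a commutative ring in which every element is its own additive inverse. Next I would put $x\leq y:\iff xy=x$, check this is a partial order (reflexivity from idempotence, antisymmetry and transitivity from commutativity and associativity), and then verify with respect to it that $xy=\inf\{x,y\}$, that $x+y+xy=\sup\{x,y\}$, that $0$ and $1$ are least and greatest, that $a\land(b\lor c)=(a\land b)\lor(a\land c)$ holds by a one-line expansion, and that $1+x$ is a complement of $x$ (namely $x(1+x)=0$ and $x\lor(1+x)=1$). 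Uniqueness of the complement follows from distributivity by the usual argument, $y=y\land(x\lor x')=(y\land x)\lor(y\land x')=y\land x'$ and symmetrically $x'=x'\land y$, so $y=x'$.

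For (iii) I would verify the two round trips. Starting from a Boolean algebra and applying (i) then (ii): one has $x\land_{\mathrm{new}}y=xy=x\land y$ and $x'_{\mathrm{new}}=1+x=(1\land x')\lor(0\land x)=x'$, while for the join the auxiliary identity ``$a\land b=0$ implies $a+b=a\lor b$'' (because $a\land b=0$ gives $a\leq b'$ and $b\leq a'$, so $a\land b'=a$ and $a'\land b=b$) reduces $x\lor_{\mathrm{new}}y=x+y+xy$ to $\bigl((x\land y')\lor(x'\land y)\bigr)\lor(x\land y)$, which simplifies to $x\lor y$ via $x\land(y'\lor y)=x$ and $x\lor(x'\land y)=(x\lor x')\land(x\lor y)$. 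Starting from a Boolean ring and applying (ii) then (i): one computes $x\land y'=x+xy$, $x'\land y=y+xy$, the product $(x+xy)(y+xy)=0$, hence their join equals their sum $x+y$, and $x\land y=xy$; so both constructions are mutually inverse and (iii) follows.

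For (iv), the observation that makes the statement essentially free is that a subset $J\subseteq B$ is an ideal of the Boolean algebra if and only if it is an ideal of the associated Boolean ring: if $J$ is a lattice ideal then for $a,b\in J$ we have $a+b=(a\land b')\lor(a'\land b)\in J$ since $a\land b'\leq a$ and $a'\land b\leq b$, and $r\land a\leq a\in J$ for any $r$, so $J$ is a ring ideal; conversely a ring ideal is downward closed because $a\leq b$ and $b\in J$ give $a=ab\in J$, and closed under joins because $a\lor b=a+b+ab\in J$. Properness is the same condition on both sides, and the two primality conditions are literally identical, both reading ``$ab\in J$ implies $a\in J$ or $b\in J$'' (as $a\land b=ab$). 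Hence the identity map on subsets restricts to a bijection between the prime ideals of the Boolean algebra and the prime ideals of its associated Boolean ring, and it trivially preserves inclusions. The main obstacle in the whole proof is bookkeeping rather than ideas: the associativity of $+$ in (i) is the one genuinely long lattice computation, and throughout (i)--(iii) one must take care to invoke only the structure already available at each stage and not, circularly, the structure being constructed.
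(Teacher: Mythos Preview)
Your proof is correct and follows the standard route. The paper itself does not work out (i)--(iii) at all: it simply cites \cite[Ex.~4.27]{bib2} for those parts, and for (iv) it invokes \cite[Ex.~4.29]{bib2} to say that a subset is a Boolean-algebra ideal if and only if it is a ring ideal, then remarks that the primality conditions visibly match. Your argument for (iv) is exactly this observation, spelled out; your treatments of (i)--(iii) supply the details the paper delegates to the reference. So there is no substantive divergence in strategy---you are filling in what the paper leaves as cited exercises---and the one genuinely laborious step you flag (associativity of $+$ in (i)) is indeed the main computation.
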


\begin{proof}
For the proof of (i) $\sim$ (iii) see \cite[Ex.4.27]{bib2}.
As for the proof of (iv), by \cite[Ex.4.29]{bib2} and Proposition \ref{proposition:aaa} (i), (ii), and (iii), a subset  of a Boolean algebra is an ideal of the Boolean algebra if and only if the subset is an ideal of the corresponding Boolean ring. We can easily see that the correspondence of their ideals gives the correspondence of their prime ideals.

\begin{comment}
(iv) Take an arbitrary prime ideal $\cQ$ of the Boolean algebra $(B;\lor,\ \land,\  ',\ 0,\ 1)$. We will show that the $\cQ$ is a prime ideal of the Boolean ring defined in Proposition \ref{proposition:aaa}(i).

For any $x, y \in \cQ$, the sum $x+y:=(x \land y')\lor(x' \land y)$ is in $\cQ$ because $x \land y'  \in \cQ$ and $x' \land y \in \cQ$ for $x,\ y\in \cQ$ (recall that $\land$ is defined as $\inf$) and ideals of Boolean algebras are closed under join and $\leq$. For any $x\in B$ and for any $y\in \cQ$, the product $x\cdot y:=x\land y$ is in $\cQ$. Thus $\cQ$ is an ideal of the Boolean ring. If $x\cdot y:=x\land y\in \cQ$, then $x$ or $y$ is in $\cQ$ because $\cQ$ is a prime ideal of the Boolean algebra. Therefore $\cQ$ is a prime ideal of the Boolean ring. 
Let $\cP$ be a prime ideal of the Boolean ring associated to the Boolean algebra $B$. We will show that $\cP$ is a prime ideal of the Boolean algebra.

For any $a,b \in \cP,$ we have $a\lor b:=a+b+ab\in \cP$.
Also, for any $a\in B$ and $b\in \cP$, if $a\leq b$ then we have $a(b+1)=ab'=0$. Thus $a\in \cP.$  So $\cP$ is an ideal of the Boolean algebra. Finally, we will show that the $\cP$ is a prime ideal as the Boolean algebra. If $a\land b \in \cP$ for $a,b \in B$, we have $a\land b:=ab \in \cP$. Since $\cP$ is the prime ideal of the Boolean ring $B$, we have $a\in \cP$ or $a\in \cP$. Thus we have shown that $\cP$ is a prime ideal as  the Boolean algebra. 
%Now, we gave a map (denote it $\theta$) from prime ideals of the Boolean algebra to prime ideals of the corresponding Boolean ring. The map $\theta$ is clearly injective. We show surjectivity. For any prime ideal $\cQ$ of the Boolean ring, we can see this prime ideal of the Boolean ring $\cQ$ as prime ideals of the Boolean algebra, by the Proposition \ref{proposition:aaa} (ii), and $\theta(\cQ)=\cQ$. Thus, there is a bijection between prime ideals of the Boolean algebra and ring and the correspondence preserves order.
\end{comment}
\end{proof}

By this proposition, we can see Boolean algebras and Boolean rings are equivalent.

\begin{example} \label{example:29}\ \\
Consider the Boolean algebra $(\cP(\ZZ); \cap, \cup, ^c, \emptyset, \ZZ)$ corresponding to the power set of the set of integers. The Boolean ring corresponding to $\cP(\ZZ)$ is isomorphic to $\prod_{i\in \ZZ} (\ZZ/{2\ZZ})$ via the map 
$$\phi : \cP(\ZZ) \to \prod_{i\in \ZZ} (\ZZ/{2\ZZ}), \quad  I\mapsto (a_i)_{i\in \ZZ}$$ where $a_i=1+2\ZZ$ if $i\in I$ otherwise $a_i=0+2\ZZ$. Note $\phi$ is a bijective ring homomorphism, i.e., a ring isomorphism. Here, for each $I,\ J\in \cP(\ZZ)$, the sum $I+J:=(I\cup J)\backslash (I\cap J)$ and the product $I \cdot J:=I\cap J $ on the left are those defined in Proposition \ref{proposition:aaa}(i). By Proposition \ref{proposition:aaa}(iv), The map $\phi$ gives a bijection between $\Spc(\cP(\ZZ))$ and $\Spc(\prod_{i\in \ZZ} (\ZZ/{2\ZZ}))$.
%Moreover, a correspondence of the prime ideals is given in the following way,
%$$\Spc(\cP(\ZZ)) \to \Spc \left ( \prod_{i\in \ZZ} (\ZZ/{2\ZZ}) \right ); \quad   \cQ \mapsto \cP $$
%where $\cP = \{ a_I \in \prod_{i \in \ZZ} (\ZZ/2\ZZ) : I \in \cQ\}$ 
%$$\cP:=\bigcup_{I\in \cQ} \left \{ (a_i)_{i\in \ZZ} \in \prod_{i\in \ZZ} (\ZZ/{2\ZZ}) : a_i=1+2\ZZ \textrm{ if and only if } i\in I \right \}.$$
%and for $I \subseteq \ZZ$ we define $a_I = (a_i)_{i \in \ZZ} \in \prod_{i \in \ZZ} (\ZZ/2\ZZ)$ by $a_i = 1$ if and only if $i \in I$.
\end{example}

\section{Prime tensor ideals of $\RAZ$} \label{sec3}

\subsection{Classification of the prime tensor ideals of $\text{Rep}^{\text{pfd}}$($A_{\ZZ}$)}

Here, we will classify the prime  $\otimes$-ideals of $\Rfd(A_\ZZ)$ when the quiver $A_\ZZ$ has bounded path length.

Denote the class of prime ideals of $\Rfd(A_\ZZ)$ and $(\cP(\ZZ); \cap,\cup,  ^c, \emptyset, \ZZ)$ by $\Spc(\Rfd(A_\ZZ))$ and  $\Spc(\cP({\ZZ}))$ respectively. We will consider a subset $Q'_0$ of the vertices of a quiver $Q = (Q_0, Q_1, s, t)$ as a full subquiver of $Q$. A full subquiver is determined by the set of vertices. For example, we will be able to see $\ZZ = (\AZ)_0$ as $\AZ$. 

\begin{proposition} \label{lemma:KsuppViffV}
Assume that there is a natural number $n\geq 1$ such that the length of paths in the quiver $\AZ$ is less than $n$. Then, for any representation $V\in \Rfd(A_\ZZ)$ and a prime $\otimes$-ideal $\cM\in \Spc ( \Rfd (A_\ZZ))$, we have $K_{\supp(V)}\in \cM$ if and only if $V\in \cM$.
\end{proposition}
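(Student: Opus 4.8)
The plan is to exploit Theorem~\ref{proposition:allAreInterval}: write $V$ as a direct sum of interval representations, $V \cong \bigoplus_{\lambda \in \Lambda} K_{I_\lambda}$, and reduce the statement to a comparison between $K_{\supp(V)}$ and these intervals. Since $\supp(V) = \bigcup_\lambda (I_\lambda)_0$ as a set of vertices, the role of the bounded-path-length hypothesis should be to guarantee that $K_{\supp(V)}$ decomposes as a \emph{finite} direct sum of interval representations on each connected component, and more importantly that each interval summand $K_J$ of $K_{\supp(V)}$ has length $< n$, hence can be ``reached'' from the $K_{I_\lambda}$'s in finitely many tensor/extension steps.

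For the direction $V \in \cM \Rightarrow K_{\supp(V)} \in \cM$: first I would reduce to a single interval. Observe that $K_{\supp(V)}$ is supported exactly on the union of the supports of the interval summands of $V$. The key lemma I would isolate is: if $K_I$ and $K_{I'}$ are interval representations whose supports $I, I'$ overlap or are adjacent (so $I \cup I'$ is again convex and connected), then $K_{I \cup I'}$ lies in the tensor ideal generated by $\{K_I, K_{I'}\}$ — one should be able to see $K_{I\cup I'}$ as a subquotient, or an extension, built from $K_I \otimes (\text{something})$ and $K_{I'} \otimes (\text{something})$, using that $K_I \otimes K_{I'} = K_{I \cap I'}$ pointwise. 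Iterating along a connected component, and using that each component of $\supp(V)$ has bounded length so only finitely many intervals are involved, gives $K_{\supp(V)} \in \cM$. Conversely, for $K_{\supp(V)} \in \cM \Rightarrow V \in \cM$: each interval summand $K_{I_\lambda}$ has $I_\lambda \subseteq \supp(V)$, so $K_{I_\lambda}$ is a subrepresentation (indeed a direct summand after restricting) of $K_{\supp(V)}$, or more precisely $K_{I_\lambda} \otimes K_{\supp(V)} = K_{I_\lambda}$; since $\cM$ is a $\otimes$-ideal this forces $K_{I_\lambda} \in \cM$. Then since $\cM$ is coherent (closed under extensions and contains finite direct sums as iterated extensions) and $V$ is a direct sum of the $K_{I_\lambda}$, one needs finitely many of them to reconstruct $V$ — again the bounded-length hypothesis, since it bounds the number of distinct intervals and $V$ is pointwise finite-dimensional, so the direct sum is locally finite. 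I'd need to argue that $\cM$ being closed under extensions and finite direct sums, together with local finiteness, lets us conclude $V \in \cM$ even when $\Lambda$ is infinite — the point being that membership in $\cM$ is checked ``locally'' in a sense compatible with the primeness; alternatively, use that $V$ embeds appropriately into a product and extract the needed finiteness.

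The main obstacle I expect is the passage between finite and infinite direct sums: a coherent subcategory is closed under \emph{finite} extensions but not a priori under arbitrary direct sums, so reconstructing $V = \bigoplus_\lambda K_{I_\lambda}$ (with $\Lambda$ possibly infinite) from the individual summands being in $\cM$ is not automatic. I expect the resolution to use the bounded-path-length hypothesis decisively: it forces $\supp(V)$ to be a disjoint union of intervals each of length $<n$, and on each such finite interval $V$ restricts to a finite-dimensional representation, so only finitely many interval summands meet any given finite interval; combined with primeness of $\cM$ one should be able to push the argument through, perhaps by showing directly that $V$ and $K_{\supp(V)}$ generate the same $\otimes$-ideal and that containment of one in a prime $\cM$ is equivalent to containment of the other. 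The other, more routine, obstacle is the explicit verification that $K_{I\cup I'}$ lies in the $\otimes$-ideal generated by $K_I$ and $K_{I'}$ when the supports meet — this is a concrete computation with interval representations, short exact sequences, and pointwise tensor products, which I would carry out but not belabor here.
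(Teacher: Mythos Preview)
Your plan contains a genuine gap in the hard direction $V\in\cM\Rightarrow K_{\supp(V)}\in\cM$, and it stems from a misreading of the hypothesis. The bounded-path-length assumption bounds the length of \emph{directed} paths in $\AZ$; it does not bound the size of connected components of $\supp(V)$. In the alternating zigzag quiver $\cdots\to\bullet\leftarrow\bullet\to\bullet\leftarrow\cdots$ every path has length~$1$, yet $\ZZ$ is a single connected component. Take $V=K'_\ZZ$ (the representation with $K$ at every vertex and all arrows acting by zero). Then $V=\bigoplus_{i\in\ZZ}K_{\{i\}}$, $\supp(V)=\ZZ$, and $K_{\supp(V)}=K_\ZZ$ is indecomposable. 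Your iterative ``key lemma'' would only produce $K_{[-N,N]}\in\cM$ for each finite $N$; reaching $K_\ZZ$ would require infinitely many extension steps, which coherence does not provide. So the sentence ``each component of $\supp(V)$ has bounded length so only finitely many intervals are involved'' is simply false, and nothing in your outline replaces it.

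The paper's proof of this direction is entirely different and uses primeness in an essential way that your plan never does. One argues by contradiction: since $K_{\supp(V)}\otimes K_{\supp(V)^c}=0\in\cM$, primeness forces $K_{\supp(V)}\in\cM$ or $K_{\supp(V)^c}\in\cM$; assuming the latter, one shows (via an extension with $V$ and a kernel argument) that $K'_\ZZ\in\cM$. Then, exploiting that bounded path length forces infinitely many sinks and sources, one explicitly constructs subsets $B,D\subset\ZZ$ with $K_B\otimes K_D$ supported only at isolated sources (hence in $\cM$ once $K'_\ZZ$ is), so primeness again gives $K_B\in\cM$ or $K_D\in\cM$; finally a \emph{finite} (length $\le n$) sequence of extensions enlarges $B$ to all of $\ZZ$, yielding $K_\ZZ\in\cM$, a contradiction. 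The finiteness here comes from the bound on directed path length between consecutive sinks and sources, not from any finiteness of $\supp(V)$.

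For the easy direction you are working much too hard: since $V\otimes K_{\supp(V)}\cong V$ pointwise, $K_{\supp(V)}\in\cM$ gives $V\in\cM$ immediately from the $\otimes$-ideal property, with no need to decompose $V$ or to confront infinite direct sums at all.
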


\begin{proof}
%When $K_{\text{supp}(V)}\in P$ holds, show $V\in \cM$. 
$(\Rightarrow)$. 
Since $\cM$ is a prime $\otimes$-ideal, we have $V=V\otimes K_{\supp(V)} \in \cM$. 
\\
$(\Leftarrow)$. Let the representation $V$ be in $\cM$. We have $0=K_{\supp(V)}\otimes K_{\supp(V)^c} \in \cM$. Since $\cM$ is a prime $\otimes$-ideal, either $K_{\supp(V)}$ or $K_{\supp(V)^c}$ are in $\cM$. We show that $K_{\supp(V)} \in \cM$ by contradiction. More precisely, we will show that $K_{\supp(V)^c} \in \cM$ implies $K_\ZZ \in \cM$, and this contradicts the assumption that $\cM$ is prime.

Since $\cM$ is a prime $\otimes$-ideal, $\cM$ is closed under extension. Thus, for an exact sequence,
$$ 0\to V \to V\oplus K_{\supp(V)^c} \to K_{\supp(V)^c} \to 0   
$$
the representation $V\oplus K_{\supp(V)^c} $ is in $\cM$. Then, we have $K'_{\ZZ} \otimes (V\oplus K_{supp(V)^c}) \in \cM$ where the representation $K'_{\ZZ}$ is defined by
$$
(K'_{\ZZ})_{a} := K  \  (\ a \ \text{is any integer})
$$
$$
(K'_{\ZZ})_{\alpha} :=0 \  (\ \alpha \ \text{is any arrow})
$$
Then, the representation $K'_{\ZZ}$ is in $\cM$ because there exists a morphism
$$
K'_{\ZZ} \otimes (V\oplus K_{\supp(V)^c}) \to K'_{\ZZ} \otimes (V\oplus K_{\supp(V)^c})
$$
such that its kernel is $K'_{\ZZ}$. Here, note that support of $K'_{\ZZ} \otimes (V\oplus K_{\supp(V)^c})$ is $\ZZ$. 

Recall that we assume that all paths of the quiver $\AZ$ have length smaller than or equal to $n$. This implies that the quiver has infinitely many sinks and sources. Take an arbitrary sink and let the sink be $t_0$ ($t$ comes from ``target"). And define a source $s_1$ as the smallest source larger than $t_0$ . Define a sink $t_2$ as the smallest sink larger than $s_1$. Inductively, we can define sinks $t_{2i}$ and sources $s_{2i+1}$ for all non negative integers $i$. Similarly, we can define sinks $t_{2i}$ and sources $s_{2i-1}$ for all non positive integers $i$. %

Let $B \subset \ZZ$ be $\bigcup_{i\in \ZZ} [s_{4i-1},s_{4i+1}]$, and $D \subset \ZZ$ be $\bigcup_{i\in \ZZ} [s_{4i+1},s_{4i+3}]$ and $C$ be $\bigcup_{i\in \ZZ} \{s_{4i-1},s_{4i+1} \}$. Then we obtain
$$K_{C}=K_B \otimes K_D
$$
If we write pictures of the sets $B$, $D$ and $C$, the sets will be the integers marked by $\underbrace{}$ where B is  like

$$
\dots \longleftarrow  \underbrace{s_{-5}
\longrightarrow t_{-4}
\longleftarrow s_{-3} }
    \longrightarrow t_{-2}
    \longleftarrow \underbrace{ s_{-1}
    \longrightarrow t_0
    \longleftarrow s_1 }
    \longrightarrow t_2
    \longleftarrow \underbrace{s_3 \longrightarrow }
     \dots
$$
and $D$ is like

$$
\dots \underbrace{
\longleftarrow  s_{-5} }
\longrightarrow t_{-4}
 \longleftarrow \underbrace{s_{-3} 
    \longrightarrow t_{-2}
    \longleftarrow  s_{-1} }
    \longrightarrow t_0
    \longleftarrow  \underbrace{s_1 
    \longrightarrow t_2
    \longleftarrow s_3 } \longrightarrow 
     \dots
$$
and $C$ is like 
$$
\dots 
 \longleftarrow  \underbrace{ s_{-5} } 
\longrightarrow t_{-4}
 \longleftarrow \underbrace{s_{-3} } 
    \longrightarrow t_{-2}
    \longleftarrow  \underbrace{s_{-1} } 
    \longrightarrow t_0
    \longleftarrow  \underbrace{s_1 } 
    \longrightarrow t_2
    \longleftarrow \underbrace{s_3}  \longrightarrow \dots
$$

Since $K_B \otimes K_D=K_{C}=K_{C}\otimes K'_{\ZZ} \in \cM$, we have $K_B$ or $K_D$ are in $\cM$. We will assume that $K_B$ is in $\cM$. (we can assume $K_D \in \cM$ instead of $K_B \in \cM$. The proof is similar.  ) 
\\

For this $B$, we will show the subsets $B'$, $B''$ of $\ZZ$ where $B':=B \cup \bigcup_{i\in \ZZ} [s_{4i+1}, t_{4i+2}-1]$ and $B'':=B' \cup \bigcup_{i\in \ZZ} [t_{4i-2}+1, s_{4i+2}-1]$ satisfy $K_{B'}\in \cM$ and $K_{B''}\in \cM$. We will show that $K_{\ZZ} \in \cM$ using $K_{B''}\in \cM$.\\

(Proof of  $K_{B'}\in \cM$) \\
For the $B$, we show $K_{B'} \in \cM$ where $B':=B \cup \bigcup_{i\in \ZZ} [s_{4i+1}, t_{4i+2}-1]$. 
Define a subset $B_1:=B\cup \{s_{4i+1}+1 : i\in \ZZ, \ s_{4i+1}+1 \ \text{is not a sink} \}$. Then, we have an exact sequence 
$$ 0 \to K_{B_1 \setminus B} \to K_{B_1} \to K_B \to 0  
$$
where $K_{B_1 \setminus B}= K_{B_1 \setminus B} \otimes K'_{\ZZ} \in \cM$ and since $\cM$ is closed under extensions, the representation $K_{B_1}$ is in $\cM$. Inductively, we can define subsets of $\ZZ$ by
$$B_{\ell+1}:=B_{\ell}\cup \{s_{4i+1}+(\ell+1) \  : \  i\in \ZZ, \ s_{4i+1}+1, \dots, s_{4i+1}+\ell  \ \text{are not sinks} \}$$
for non negative integer $\ell$. Then we can show that $K_{B_{\ell}} \in \cM$ by mathematical induction.
For $\ell=1$, we have already shown that $K_{B_1}\in \cM$.
If we have $K_{B_{\ell}}\in \cM$, then we can show $K_{B_{\ell+1}}\in \cM$ because for $K_{B_{\ell}} \in \cM$, there is an exact sequence
$$ 0 \to K_{B_{\ell+1} \setminus B_{\ell}} \to K_{B_{\ell+1}} \to K_{B_{\ell}} \to 0
$$
We obtain $K_{B_{\ell+1}}$ in $\cM$ because $\cM$ is closed under extensions with $K_{B_{\ell}}$ and $K_{B_{\ell+1}\setminus B_{\ell}} \in \cM$. Thus, by mathematical induction, we have $K_{B_{\ell}} \in \cM$ for any $\ell \geq 1$. Note that there exists $m \leq n$ such that $B_{m} = B_{m+1}=\dots$ because the length of paths are smaller than or equal to $n$. Then, the set $B_{m}$ contains $s_{4i+2}-1$ for any $i\in \ZZ$ and we have $B'=B_m$ and $K_{B'} \in \cM$.
\\

(Proof of  $K_{B''}\in \cM$) \\
 For the $B'$ defined above, we show $K_{B''} \in \cM$ where $B'':=B' \cup \bigcup_{i\in \ZZ} [s_{4i-2}+1, s_{4i+2}-1]$. 
 
Define a subset $B'_1:=B'\cup \{s_{4i-1}-1 : i\in \ZZ, \ s_{4i-1}-1 \ \text{is not a sink} \}$. Then, we have an exact sequence 
$$ 0 \to K_{B'_1 \setminus B'}  \to K_{B'_1} \to K_{B'} \to 0  
$$
where $K_{B'^1 \setminus B'}=K_{B'^1 \setminus B'} \otimes K'_{\ZZ} \in \cM$ and since $\cM$ is closed under extensions, $K_{B'_1}$ is in $\cM$. Inductively, we can define subsets of $\ZZ$ as
$$B'_{(\ell+1)}:=B'_{\ell}\cup \{s_{4i-1}-(\ell+1) : i\in \ZZ, \ s_{4i-1}-1, \dots, s_{4i-1}-(\ell+1)  \ \text{are not sinks} \}$$
for non negative integer $\ell$. Then we can show that $K_{B_{\ell}} \in \cM$ for any non-negative integers by mathematical induction.
For $\ell=1$, we have already shown that $K_{B'_1}\in \cM$.
If we have $K_{B'_{\ell}}\in \cM$, then we can show $K_{B'_{(\ell+1)}}\in \cM$ because for $K_{B'_{\ell}} \in \cM$, there is an exact sequence
$$ 
0\to K_{B'_{(\ell+1)} \setminus B'_{\ell} }  \to K_{B'_{(\ell+1)}} \to K_{B'_{\ell}} \to 0
$$

We obtain $K_{B'_{(\ell+1)}}$ in $\cM$ because $\cM$ is closed under extensions. Thus, by mathematical induction, we have $K_{B'_{\ell}} \in \cM$ for any $\ell \geq 1$. Note that there exists $m \leq n$ such that $B'_{m} = B'_{(m+1)}=\dots$ because the length of paths are smaller than or equal to $n$. Then, the set $B'_{m}$ contains $t_{4i-2}+1$ for any $i\in \ZZ$ and we have $B''=B'_m$ and $K_{B''} \in \cM$.
\\

(Proof of $K_{\ZZ} \in \cM$)\\
Define $E = \{t_{4i+2} : i \in \ZZ \}$ and note that $B''=\ZZ \setminus \{t_{4i+2} : i \in \ZZ \} = \ZZ \setminus E$ and $K_E = K_E \otimes K'_\ZZ \in \cM$. We have the exact sequence
$$
0\to K_E \to K_{\ZZ} \to  K_{B''} \to 0.
$$

Since $\cM$ is closed under extensions and $K_{B''}, K_E \in \cM$, the representation $K_{\ZZ} \in \cM$. Thus, $K_{\ZZ} \in \cM$ and it leads $\cM=\RAZ$. This is a contradiction. Hence, we have $K_{\supp(V)}\in \cM$.
\end{proof}

\begin{proposition} \label{lemma:31} \ \\
There is a bijection between $\Spc(\Rfd(A_\ZZ))$ and   $\Spc(\cP(\ZZ))$.
\end{proposition}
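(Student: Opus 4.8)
The plan is to write the bijection down explicitly. For a subset $S \subseteq \ZZ$, regarded as a full subquiver of $\AZ$, let $K_S \in \Rfd(\AZ)$ be the associated representation (the field $K$ at each vertex of $S$, the identity along each arrow internal to $S$, and $0$ elsewhere), so that $\supp(K_S) = S$. I would define
$$\Phi \colon \Spc(\Rfd(\AZ)) \longrightarrow \Spc(\cP(\ZZ)), \qquad \Phi(\cM) := \{\, S \subseteq \ZZ : K_S \in \cM \,\},$$
$$\Psi \colon \Spc(\cP(\ZZ)) \longrightarrow \Spc(\Rfd(\AZ)), \qquad \Psi(J) := \{\, V \in \Rfd(\AZ) : \supp(V) \in J \,\},$$
and show they are mutually inverse. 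Three elementary facts will be used repeatedly: $K_S \otimes K_T = K_{S\cap T}$ (checked vertex by vertex and arrow by arrow), and for any objects $V,W$, $\supp(V \otimes W) = \supp(V) \cap \supp(W)$ while $\supp(V \oplus W) = \supp(V) \cup \supp(W)$. Also, by Proposition \ref{lemma:KsuppViffV}, $\Phi(\cM) = \{\supp(V) : V \in \cM\}$, so whether an object lies in $\cM$ depends only on its support.

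First I would check $\Phi(\cM)$ is a prime ideal of $\cP(\ZZ)$. Downward closure is the tensor-ideal axiom: if $S \subseteq T$ and $K_T \in \cM$, then $K_S = K_S \otimes K_T \in \cM$. Closure under unions uses that a coherent subcategory is closed under direct sums (via the split extension $0 \to K_S \to K_S \oplus K_T \to K_T \to 0$): if $K_S, K_T \in \cM$ then $K_S \oplus K_T \in \cM$, and since $\supp(K_S \oplus K_T) = S \cup T$, Proposition \ref{lemma:KsuppViffV} yields $K_{S\cup T} \in \cM$. Primality is immediate from $K_S \otimes K_T = K_{S\cap T}$ together with primality of $\cM$, and properness holds because $K_\ZZ = K_{\AZ}$ is the $\otimes$-unit, so $K_\ZZ \in \cM$ would force $\cM = \Rfd(\AZ)$.

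Next I would check $\Psi(J)$ is a prime $\otimes$-ideal, which is routine bookkeeping with supports. The zero object has empty support, which lies in every ideal; for an extension $0 \to V' \to V \to V'' \to 0$ one has $\supp(V) \subseteq \supp(V') \cup \supp(V'')$, which lies in $J$ because $J$ is union-closed; and for a morphism $f\colon V \to W$ one has $\supp(\Ker f) \subseteq \supp(V)$ and $\supp(\Coker f) \subseteq \supp(W)$, so downward closure of $J$ handles kernels and cokernels. The $\otimes$-ideal property and primality come from $\supp(V\otimes W) = \supp(V)\cap\supp(W)$ and the corresponding facts about $J$, and properness holds because $\ZZ \notin J$ forces $K_{\AZ} \notin \Psi(J)$.

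Finally, I would verify that $\Phi$ and $\Psi$ are inverse to each other. The equality $\Phi(\Psi(J)) = J$ is formal from $\supp(K_S) = S$. The equality $\Psi(\Phi(\cM)) = \cM$ is exactly Proposition \ref{lemma:KsuppViffV}: $V \in \Psi(\Phi(\cM))$ means $K_{\supp(V)} \in \cM$, which holds if and only if $V \in \cM$. So the only genuine input is that last equivalence — this is where the bounded-path-length hypothesis is actually needed — and everything else is formal. The one subtlety to keep in mind is that $K_S$ must be taken for an \emph{arbitrary} subset $S$ (not necessarily convex or connected), interpreted as the full-subquiver representation, and one should confirm $K_S \otimes K_T = K_{S\cap T}$ on arrows, not merely on vertices, before invoking it.
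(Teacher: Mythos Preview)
Your proof is correct and follows essentially the same approach as the paper: define forward and backward maps via supports, verify each lands in the set of prime ideals, and use Proposition~\ref{lemma:KsuppViffV} for the nontrivial inclusion $\Psi(\Phi(\cM)) \subseteq \cM$. The only cosmetic difference is that you define $\Phi(\cM)=\{S:K_S\in\cM\}$ whereas the paper uses $\{\supp(V):V\in\cM\}$, but you already observe these coincide by Proposition~\ref{lemma:KsuppViffV}, and your formulation makes the primality check for $\Phi(\cM)$ slightly cleaner than the paper's.
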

\begin{proof} Define two maps 
\begin{align*}
\phi: \Spc(\Rfd(A_\ZZ)) 
&\to  \Spc(\cP({\ZZ})),  \\
\cM 
&\mapsto \{\supp(V) \in \cP(\ZZ): V\in \cM \}, \\ 
\psi:  \Spc(\cP({\ZZ})) 
&\to \Spc(\Rfd(A_\ZZ)), \\
\cQ 
&\mapsto \{V\in \Rfd(A_\ZZ) : \supp(V)\in \cQ \}.
\end{align*}

First, we check  $\phi(\cM) \in \Spc(\cP({\ZZ}))$ for $\cM \in \Spc(\Rfd(A_\ZZ))$ and $ \psi(\cQ)\in \ \Spc(\Rfd(A_\ZZ))$ for $\cQ\in \Spc(\cP({\ZZ}))$. After that, we will show $\phi$ and $\psi$ are inverse maps of each other.

The following (i), (ii) and (iii) show that $\phi(\cM)$ is an ideal of $\cP({\ZZ})$ and (iv) shows that $\phi(\cM)$ is a prime ideal of $\cP({\ZZ})$.

(i) $\phi(\cM)$ is not empty because $\emptyset = \text{supp}(0) \in \phi(\cM)$ where $0$ is the zero representation.

(ii) For any $\supp(V)$ and $\supp(V') \in \phi(\cM)$ (where $V$ and $V'$ $\in \cM$), we will show $\supp(V) \cup \supp(V') \in \phi(\cM)$. Since $\cM$ is a prime $\otimes$-ideal of 
$\Spc(\cP({\ZZ}))$, for the exact sequence
$$ 0\to V \to V\oplus V'\to V'\to 0,
$$ the direct sum $V\oplus V'$ of $V$ and $V'$ is in $\cM$. Thus, we have  $\supp(V) \cup \supp(V')= \supp(V\oplus V')\in \phi(\cM).$

(iii) For any $I\in \cP(\ZZ)$ and for any $\supp(V) \in \phi(\cM)$, we will show that if $I \subset \supp(V)$, then $I \in \phi(\cM)$. Assume that $I \subset \supp(V)$. Then we have $\supp(K_I)=\supp(K_I \otimes V)$. Since $K_I \otimes V \in \cM$, we obtain  $K_I \in \cM $ and $I\in \phi(\cM)$. 
 
By (i), (ii) and (iii), $\phi(\cM)$ is an ideal of $\cP(\ZZ)$.

(iv) If $I\cap J \in \phi(\cM)$ for $I$, $J$ $\in \cP(\ZZ)$, we will show that $I$ or $J$ are in $\phi(\cM)$. Since $I\cap J \in \phi(\cM)$, there exists $W$  $\in \cM$ such that $I\cap J=\supp(W)$. Define $V\in \RAZ$ by 
$$
V_{a} := 
\left \{ 
\begin{array}{ll} 
W_a \  & a\in \supp(W) \\ 
K & a\in I \setminus \supp(W) \\
0 & \otherwise
\end{array} 
\right . 
$$
$$
V_{\alpha} := 
\left \{ 
\begin{array}{ll} 
W_{\alpha} \  & s(\alpha), t(\alpha)\in \supp(W) \\ 
0 & \otherwise
\end{array} 
\right . 
$$
where $a$ is a vertex and $\alpha$ is an arrow of $\AZ$.

Then we have $V\otimes K_{\supp(W)}=W\in \cM$. Since $\cM$ is a prime $\otimes$-ideal, the representations $V$ or $K_{\supp(W)}$ are in $\cM$. If $V\in \cM$, then we have $I=\supp(V) \in \phi(\cM)$. On the other hand, if $K_{\supp(W)}\in \cQ$, then $K_{I}\otimes K_{J}=  K_{\supp(W)} \in \cM$. Therefore $K_{I}$ or $K_{J}$ are in $\cM$ because $\cM$ is a prime $\otimes$-ideal. This implies $I$ or $J$ are in $\phi(\cM).$   

Next, the following (i), (ii) and (iii) show that $\psi(\cQ)$ is a $\otimes$-ideal of $\RAZ$ and (iv) shows that $\psi(\cQ)$ is a prime $\otimes$-ideal of $\RAZ$.

(i) We will show that $\psi(\cQ)$ is closed under kernels and cokernels. For any $f: V\to W$ in $\psi(\cQ)$, the kernel and the cokernel of $f$ satisfy
$\supp(\Ker(f))\subseteq \supp(V)$ and $\supp(\Coker(f))\subseteq \supp(W)$ where $\supp(V)$ and $\supp(W)$ are in the prime ideal $\cQ$ of $\cP(\ZZ)$. Thus, we have $\supp(\Ker(f)) \in \cQ$ and $\supp(\Coker(f)) \in \cQ$. It means that $\Ker(f)$ and $\Coker(f)$ are in $\psi(\cQ)$. Hence, $\psi(\cQ)$ is closed under kernels and cokernels.

(ii) We will show that $\psi(\cQ)$ is closed under extensions. For $V',V'' \in \psi(\cQ)$, take an exact sequence 
$$ 0 \to  V' \to  V \to  V'' \to 0 $$ where $V\in \RAZ$. Then $\supp(V)=\supp(V')\cup \supp(V'')\in \cQ$ holds because $\supp(V')$ and $\supp(V'')$ are in the prime ideal $\cQ$. we have $V\in \psi(\cQ)$ and therefore, $\psi(\cQ)$ is closed under extensions.

(iii) We will show that for any $M\in \RAZ$ and $V\in \psi(\cQ)$, the representation $M\otimes V$ is in $\psi(\cQ)$. Since $\supp(M\otimes V)=\text{supp}(M) \cap \text{supp}(V) \in \cQ$ holds, we have $M\otimes V \in \psi(\cQ)$. 

By (i), (ii), and (iii), we have shown that $\psi(\cQ)$ is a $\otimes$-ideal of $\RAZ$. Now we show that it is prime.

(iv) If $V\otimes W \in \psi(\cQ)$ for $V$ and $W$ $\in \RAZ$, then supp$(V)\cap \supp(W)$=supp$(V\otimes W)\in \cQ$. Since $\cQ$ is a prime ideal of $\cP(\ZZ)$, we have $\supp(V)$ or $\supp(W)\in \cQ $. Hence, $V$ or $W \in \psi(\cQ)$ and therefore $\psi(\cQ)$ is a prime $\otimes$-ideal of $\cP(\ZZ)$.  

Finally, we show $\phi$ and $\psi$ are inverse maps of each other.
For any $\cM\in \Spc(\Rfd (A_\ZZ)) $ and $\cQ\in \Spc(\cP(\ZZ))$, we have

%\blue{ If $I \in \sQ$ then $\supp(K_I) = I$ so $K_I \in \psi(\cQ)$, and $I \in \phi \psi(\cQ)$. Conversely, if $I \in \phi\psi(\cQ)$ then there is a representation $V \in \psi(\cQ)$ such that $\supp(V) = I$, which means $\supp(V) \in \cQ$. So $I = \supp(V) \in \cQ$.}

\begin{align*}
\phi \circ \psi(\cQ) 
&=\phi(\{V\in \Rfd(\ZZ):\supp(V)\in \cQ \}) \\
&=\{\supp(V) \in \cP(\ZZ):V\in \{V'\in \Rfd(A_\ZZ):\supp(V')\in \cQ \} \} \\
&=\{\supp(V) \in \cP(\ZZ): V \in \Rfd(A_\ZZ),\  \supp(V)\in \cQ \} \\
&=\cQ.
\end{align*}
The last equality holds because every $I \subseteq \ZZ$ is the support of some representation (e.g., $K_I$).

\begin{align*}
\psi \circ \phi(\cM) 
&=\psi(\{\supp(V)\in \cP(\ZZ):V\in \cM \}) \\
&=\{V\in \Rfd(A_\ZZ) : \supp(V)\in \{\supp(V')\in \cP(\ZZ):V'\in \cM \} \} \\
&=\{V\in \Rfd(A_\ZZ) : \supp(V) = \supp(V')\ \exists\ V'\in \cM \} \\
&=\cM.  \qquad (\because \text{Proposition \ref{lemma:KsuppViffV}} \ )
\end{align*}
%\blue{Here, the last equality holds because by Lemma \ref{lemma:KsuppViffV}, we have $V \in \cM$ if and only if $K_{\supp V} = K_{\supp V'} \in \cM$ if and only if $V' \in \cM$.}
Here, we use Proposition \ref{lemma:KsuppViffV} to show the relation
$$\cM \supseteq \{V\in \Rfd(A_\ZZ) : \supp(V) = \supp(V')\ \exists\ V'\in \cM \}.
$$
We will prove it. For any $V\in\{V\in \Rfd(A_\ZZ) : \supp(V) = \supp(V')\ \exists\ V'\in \cM \}$, there exists $V'\in \cM$ such that $\supp(V)=\supp(V')$. By Proposition \ref{lemma:KsuppViffV}, the interval representation $K_{\supp(V')} \in \cM$. Thus, we obtain $K_{\supp(V)}=\supp(V)\otimes K_{\supp(V')} \in \cM $. Therefore, the representation $V=V \otimes K_{\supp(V)}$ is in $\cM$. We have shown 
$$\cM = \{V\in \Rfd(A_\ZZ) : \supp(V) = \supp(V')\ \exists\ V'\in \cM \}.
$$
The two maps $\phi$ and $\psi$ are inverse maps of each other. Therefore there is a bijection between $\Spc(\RAZ)$ and $\Spc(\cP(\ZZ)) $.
\end{proof}

The next corollary is our main theorem.

\begin{corollary}

There is a one-to-one correspondence between the following three sets, $ \Spc (\cP(\ZZ))$, $\Spc (\prod_{i\in \ZZ} (\ZZ/{2\ZZ}))$ and $\Spc (\RAZ)$.
\end{corollary}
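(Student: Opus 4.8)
The plan is to assemble the corollary directly from results already established in the excerpt, so the proof is essentially a bookkeeping exercise chaining together three bijections. First I would invoke Proposition \ref{lemma:31}, which supplies a bijection between $\Spc(\Rfd(A_\ZZ))$ and $\Spc(\cP(\ZZ))$; this is the substantive input and all its work has already been done (via Proposition \ref{lemma:KsuppViffV}, which is where the bounded-path-length hypothesis is used). Next I would invoke Proposition \ref{proposition:aaa}(iv), which gives an inclusion-preserving bijection between the prime ideals of any Boolean algebra and the prime ideals of its associated Boolean ring; applied to $\cP(\ZZ)$ this yields a bijection between $\Spc(\cP(\ZZ))$ and $\Spc$ of the Boolean ring corresponding to $\cP(\ZZ)$. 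Finally I would cite Example \ref{example:29}, where the ring isomorphism $\phi\colon \cP(\ZZ) \to \prod_{i\in\ZZ}(\ZZ/2\ZZ)$ is exhibited, identifying that Boolean ring with $\prod_{i\in\ZZ}(\ZZ/2\ZZ)$ and hence identifying their prime spectra.

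Concretely, the proof reads: by Proposition~\ref{lemma:31} there is a bijection $\Spc(\RAZ) \cong \Spc(\cP(\ZZ))$; by Proposition~\ref{proposition:aaa}(iii)--(iv) the Boolean algebra $\cP(\ZZ)$ corresponds to a Boolean ring with the same (prime) ideals, and by Example~\ref{example:29} that Boolean ring is isomorphic to $\prod_{i\in\ZZ}(\ZZ/2\ZZ)$ via $\phi$, so $\Spc(\cP(\ZZ)) \cong \Spc(\prod_{i\in\ZZ}(\ZZ/2\ZZ))$. Composing these two bijections gives the asserted one-to-one correspondence among all three sets. Since the quiver $\AZ$ is assumed throughout Section~\ref{sec3} to have bounded path length, the hypothesis needed by Proposition~\ref{lemma:KsuppViffV} (and hence Proposition~\ref{lemma:31}) is in force.

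There is essentially no obstacle here: the only thing to be slightly careful about is that Proposition~\ref{proposition:aaa}(iv) is stated for an abstract Boolean algebra, so one should explicitly note that it applies to $\cP(\ZZ)$ with its Boolean-ring structure from Proposition~\ref{proposition:aaa}(i), which is exactly the structure $\phi$ of Example~\ref{example:29} transports to $\prod_{i\in\ZZ}(\ZZ/2\ZZ)$ — but this compatibility is already recorded in Example~\ref{example:29}. So the entire content of the corollary is the transitivity of ``being in bijection with,'' and the write-up is three sentences plus the composition. If one wanted, one could also remark that, by Proposition~\ref{remark:18}, all these prime ideals are in fact maximal, but that is not required for the statement as given.
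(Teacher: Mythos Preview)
Your proposal is correct and follows essentially the same approach as the paper: the paper's proof simply cites Example~\ref{example:29} for the bijection $\Spc(\cP(\ZZ))\cong\Spc(\prod_{i\in\ZZ}\ZZ/2\ZZ)$ and Proposition~\ref{lemma:31} for the bijection $\Spc(\RAZ)\cong\Spc(\cP(\ZZ))$. You unpack the first citation slightly by separating the roles of Proposition~\ref{proposition:aaa}(iv) and the explicit ring isomorphism $\phi$, but Example~\ref{example:29} already records both of these, so the arguments are the same.
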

%There is a bijection between Ult$(\cP(\ZZ))$ and Spc$(\cP(\ZZ))$  by Lemma \ref{lemma:28}.  
\begin{proof}
There is a bijection between $\Spc(\cP(\ZZ))$ and $\Spc(\prod_{i\in \ZZ} (\ZZ/{2\ZZ}))$ by Example \ref{example:29}. There is a bijection between $\Spc(\Rfd(A_\ZZ))$ and   $\Spc(\cP(\ZZ))$ by Proposition \ref{lemma:31}.
\end{proof}

%\begin{remark}(\cite[Corollary of %2.4]{bib3}) \ \\　In the theory of Boolean algebras, the idea of an ultrafilter is the dual of prime ideals. Thus 
%The cardinarity of ultrafilters on the set of $\ZZ$ is same as the cardinarity of the power set of the power set of $\ZZ$. Therefore, their cardinality is also equal.  This is equal to the cardinarity of the power set of $\RR$. Thus, the cardinarity of $\Spc(\RAZ)$ and the power set of $\RR$ is equal. 
%\end{remark}

\subsection{Topological properties of Spc($\text{Rep}^{\text{pfd}}$($A_{\ZZ}$)}

In this subsection, we will see that $\Spc(\Rfd(A_\ZZ))$ is a compact Hausdorff space homeomorphic to the topological space $\Spc(\cP(\ZZ))\cong \Spc{(\prod_{i\in \ZZ} (\ZZ/{2\ZZ}))}$
when the quiver $A_\ZZ$ has bounded length of paths. That is, we show the following proposition.

\begin{proposition} \label{homeo}\ \\
Let $\cP(\ZZ)$ be the power set of $\ZZ$ and consider it as a Boolean ring (isomorphic to $(\prod_{i\in \ZZ} (\ZZ/{2\ZZ}))$ by Example \ref{example:29}). Then we have a homeomorphism between $\Spc (\RAZ)$ and $\Spc(\cP(\ZZ))$, where both sets are equipped with their respective Zariski topologies (defined below).
\end{proposition}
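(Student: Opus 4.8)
The plan is to show that the bijection $\phi\colon\Spc(\RAZ)\to\Spc(\cP(\ZZ))$ constructed in Proposition~\ref{lemma:31} is a homeomorphism; together with Example~\ref{example:29} this gives simultaneously the homeomorphism with $\Spc(\prod_{i\in\ZZ}(\ZZ/2\ZZ))$. First I recall the two topologies. On $\Spc(\RAZ)$ the Zariski topology is the one whose basic closed subsets are the supports $\mathcal{Z}(V):=\{\cM\in\Spc(\RAZ):V\notin\cM\}$, one for each object $V$. On $\Spc(\cP(\ZZ))$ the Zariski topology is the one carried over from $\Spec\bigl(\prod_{i\in\ZZ}(\ZZ/2\ZZ)\bigr)$ along Proposition~\ref{proposition:aaa}(iv) and Example~\ref{example:29}; since every element of $\prod_{i\in\ZZ}(\ZZ/2\ZZ)$ is the indicator of a unique subset of $\ZZ$, a basis of opens is $\{W_I:I\in\cP(\ZZ)\}$, where $W_I:=\{\cQ\in\Spc(\cP(\ZZ)):I\notin\cQ\}$ corresponds to the standard basic open $D(\chi_I)$, and as $\prod_{i\in\ZZ}(\ZZ/2\ZZ)$ is Boolean the idempotent $\chi_{I^c}=1-\chi_I$ has $D(\chi_I)$ equal to its vanishing locus, so each $W_I$ is clopen.

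The crucial input is Proposition~\ref{lemma:KsuppViffV}: for every $V\in\RAZ$ and every $\cM\in\Spc(\RAZ)$ one has $V\in\cM$ iff $K_{\supp(V)}\in\cM$, hence $\mathcal{Z}(V)=\mathcal{Z}(K_{\supp(V)})$. Consequently the basic closed subsets of $\Spc(\RAZ)$ are exactly the sets $\mathcal{Z}(K_I)$, $I\in\cP(\ZZ)$, and I claim each of them is clopen with $\mathcal{Z}(K_I)^c=\mathcal{Z}(K_{I^c})$. Indeed, $K_I\otimes K_{I^c}=0\in\cM$ and primality of $\cM$ force at least one of $K_I,K_{I^c}$ to lie in $\cM$; and they cannot both lie in $\cM$, for then $\phi(\cM)$ --- a \emph{proper} ideal of $\cP(\ZZ)$ by Proposition~\ref{lemma:31} --- would contain $\supp(K_I)=I$ and $\supp(K_{I^c})=I^c$, hence $I\cup I^c=\ZZ$, which is absurd. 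Thus $K_I\notin\cM\iff K_{I^c}\in\cM$, as claimed. The same bookkeeping with the ideal $\phi(\cM)$ (downward closure, closure under $\cup$, primality) gives $\mathcal{Z}(K_I)\cap\mathcal{Z}(K_J)=\mathcal{Z}(K_{I\cap J})$ and $\mathcal{Z}(K_I)\cup\mathcal{Z}(K_J)=\mathcal{Z}(K_{I\cup J})$, together with $\mathcal{Z}(K_\emptyset)=\emptyset$ and $\mathcal{Z}(K_{\ZZ})=\Spc(\RAZ)$; so $\{\mathcal{Z}(K_I):I\in\cP(\ZZ)\}$ is a basis of clopen sets for $\Spc(\RAZ)$.

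It then remains to match the two clopen bases under $\phi$. By Proposition~\ref{lemma:KsuppViffV} again, $\phi(\cM)=\{\supp(V):V\in\cM\}=\{I\in\cP(\ZZ):K_I\in\cM\}$ (if $V\in\cM$ then $K_{\supp(V)}\in\cM$, and conversely $I=\supp(K_I)$), so for every $I$
\[
\cM\in\mathcal{Z}(K_I)\iff K_I\notin\cM\iff I\notin\phi(\cM)\iff\phi(\cM)\in W_I,
\]
i.e. $\phi^{-1}(W_I)=\mathcal{Z}(K_I)$ and $\phi(\mathcal{Z}(K_I))=W_I$. Since $\{W_I\}$ is a basis of $\Spc(\cP(\ZZ))$ and $\phi^{-1}(W_I)=\mathcal{Z}(K_I)$ is open, $\phi$ is continuous; since $\{\mathcal{Z}(K_I)\}$ is a basis of $\Spc(\RAZ)$ and $(\phi^{-1})^{-1}(\mathcal{Z}(K_I))=W_I$ is open, $\phi^{-1}$ is continuous. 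Hence $\phi$ is a homeomorphism, and composing with the homeomorphism induced by the ring isomorphism of Example~\ref{example:29} finishes the proof.

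I expect the only genuinely substantive points to be (i) that \emph{every} basic closed subset of $\Spc(\RAZ)$ is of the form $\mathcal{Z}(K_I)$, and (ii) that each such subset is clopen. Point (i) is precisely Proposition~\ref{lemma:KsuppViffV}, and this is where the bounded-path-length hypothesis really enters --- as indicated in the introduction, without it $\phi$ already fails to be surjective. Point (ii) uses nothing beyond primality of $\cM$ together with properness of the ideal $\phi(\cM)$. Everything else is routine manipulation of bases plus the elementary fact that basic opens in a Boolean ring are clopen; so the \textbf{main obstacle} of the whole circle of results has already been absorbed into Proposition~\ref{lemma:KsuppViffV}, and the only care required here is to state the two Zariski topologies correctly and to track the clopen bases through $\phi$.
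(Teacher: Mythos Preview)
Your proof is correct, but your route differs from the paper's. The paper works with $\psi=\phi^{-1}$ and checks only \emph{one} direction of continuity: it computes $\psi^{-1}(Z(\{W\}))$ using Proposition~\ref{lemma:KsuppViffV} and identifies it with a Zariski-closed subset of $\Spc(\cP(\ZZ))$. To get the other direction it does not match bases as you do; instead it proves separately that $\Spc(\RAZ)$ is Hausdorff (Lemma~\ref{Clopen} and Corollary~\ref{Hdff}) and quotes that $\Spc(\cP(\ZZ))\cong\Spec(\prod_{i\in\ZZ}\ZZ/2\ZZ)$ is compact (Lemma~\ref{CPT}), then invokes the standard fact that a continuous bijection from a compact space to a Hausdorff space is a homeomorphism. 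Your argument is more direct and self-contained: you absorb the clopen observation (the paper's Lemma~\ref{Clopen}) into the proof itself and use it to show continuity in \emph{both} directions by an explicit matching $\phi(\mathcal Z(K_I))=W_I$ of clopen bases, so you never need compactness or Hausdorffness as separate inputs. The paper's approach has the minor advantage that Hausdorffness and compactness of the two spectra are recorded as standalone results; yours has the advantage of being shorter and of making transparent that the homeomorphism is really a matching of Boolean algebras of clopens.
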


We begin with the definition of the Zariski topology on $\Spc(\cA)$ for a general tensor abelian category $\cA$.

\begin{definition}[Zariski topology on tensor categories \cite{bib1}, \cite{bib70}] \label{ZarTopTen} \ \\
Let $\cA$ be a tensor abelian category. For every family of objects $ \cS \subset \cA $ we denote by $Z( \cS)$  the following subset of  $\Spc(\cA)$:
$$
Z(\cS):=\{\cM \in \Spc{\cA}: \cS\cap \cM = \emptyset \}.
$$
We will see the collection $\{Z( \cS):  \cS \subset \cA \}$ defines closed subsets of $\Spc(\cA)$ at Proposition \ref{ZAISKI}. We call the topological space \emph{Zariski topology}. The subsets $Z(\{ V\})=\{\cM\in \Spc(\cA): V  \notin \cM\}$ for $V \in \cA$ form a basis of closed subsets.
\end{definition} 

\begin{proposition}\label{ZAISKI}  \ \\
The collection of sets $Z(\cS)$ from Definition~\ref{ZarTopTen} are the closed subsets of a topology.
\end{proposition}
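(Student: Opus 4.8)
The plan is to check directly from the definition that the family $\{Z(\cS):\cS\subset\cA\}$ satisfies the three axioms for the closed sets of a topology: it contains $\emptyset$ and $\Spc(\cA)$, it is stable under arbitrary intersections, and it is stable under finite unions.

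First I would dispose of the easy parts. Since every coherent subcategory contains the zero object, $0$ lies in every $\cM\in\Spc(\cA)$, so $Z(\{0\})=\emptyset$; and $Z(\emptyset)=\Spc(\cA)$ holds vacuously. For an arbitrary family $\{\cS_i\}_{i\in\Lambda}$ of subsets of $\cA$, a prime $\otimes$-ideal $\cM$ satisfies $\cS_i\cap\cM=\emptyset$ for all $i$ exactly when $\bigl(\bigcup_{i}\cS_i\bigr)\cap\cM=\emptyset$, so $\bigcap_{i\in\Lambda}Z(\cS_i)=Z\bigl(\bigcup_{i\in\Lambda}\cS_i\bigr)$. (This is also what makes the sets $Z(\{V\})$ a basis of closed sets, as asserted after Definition~\ref{ZarTopTen}.)

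The real content is stability under finite unions, and by induction it suffices to handle two families $\cS_1$ and $\cS_2$. I would prove the identity $Z(\cS_1)\cup Z(\cS_2)=Z(\cS_1\oplus\cS_2)$, where $\cS_1\oplus\cS_2:=\{A\oplus B:A\in\cS_1,\ B\in\cS_2\}$. This rests on the following lemma, valid for any coherent $\otimes$-ideal $\cM$ (in particular for any $\cM\in\Spc(\cA)$) and any objects $A,B$ of $\cA$: one has $A\oplus B\in\cM$ if and only if $A\in\cM$ and $B\in\cM$. The ``if'' direction follows from the short exact sequence $0\to A\to A\oplus B\to B\to 0$ together with closure of $\cM$ under extensions. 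For the ``only if'' direction --- the step I expect to be the main obstacle --- I would take the projection $q\colon A\oplus B\to A\oplus B$ that kills the $A$-summand and is the identity on the $B$-summand; this is an endomorphism of the object $A\oplus B\in\cM$, so $\Ker(q)$, which is $A$, lies in $\cM$ by closure under kernels, and symmetrically $B=\Ker(\mathrm{id}-q)\in\cM$.

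Granting the lemma, I would conclude by complementation: $\cM\notin Z(\cS_1)\cup Z(\cS_2)$ means there exist $A\in\cS_1\cap\cM$ and $B\in\cS_2\cap\cM$, which by the lemma is equivalent to the existence of $A\in\cS_1$, $B\in\cS_2$ with $A\oplus B\in\cM$, i.e.\ to $(\cS_1\oplus\cS_2)\cap\cM\neq\emptyset$, i.e.\ to $\cM\notin Z(\cS_1\oplus\cS_2)$. Hence $Z(\cS_1)\cup Z(\cS_2)=Z(\cS_1\oplus\cS_2)$, and a finite union of sets of the form $Z(-)$ is again of that form. Together with the second paragraph this shows the $Z(\cS)$ are the closed sets of a topology. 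Note that primality of the ideals is not used anywhere in this verification; only coherence enters.
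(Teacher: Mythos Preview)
Your proof is correct and follows the same route as the paper: the paper states exactly the identities $\bigcap_j Z(\cS_j)=Z(\bigcup_j\cS_j)$ and $Z(\cS_1)\cup Z(\cS_2)=Z(\{V_1\oplus V_2:V_i\in\cS_i\})$, together with $Z(\cA)=\emptyset$ and $Z(\emptyset)=\Spc(\cA)$, but without justification. Your contribution is the direct-sum lemma (via extension closure one way and the kernel of an idempotent endomorphism the other), which is precisely the missing verification of the union identity; your remark that only coherence, not primality, is needed is also correct and not noted in the paper.
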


\begin{proof}
Let $( \cS_j)_{j\in J}$ be a collection of families of objects of $\cA$ and let $ \cS_1$ and $ \cS_2$ be the families of objects of $\cA$. Then we have $\bigcap_{j\in J}Z( \cS_j)=Z( \bigcup_{j\in J} \cS_j )$ and $Z( \cS_1)\cup Z( \cS_2)=Z(\{ V_1\oplus V_2 : V_1 \in  \cS_1, \ V_2 \in  \cS_2  \})$. Moreover, we have $Z(\cA)=\emptyset$ and $Z(\emptyset)=\cA$. Thus the collection  $\{Z( \cS):  \cS \subset \cA \}$ are the closed subsets of a topology on $\Spc(\cA)$.
\end{proof}

\begin{lemma}\label{Clopen} \ \\
Consider the tensor category $\Rfd(A_\ZZ)$, where $A_\ZZ$ is a quiver of type $A$ with bounded path length. %
Then, for any representation $V\in \Rfd(A_\ZZ)$ the closed set $Z(\{V\})=\{\cM\in \Spc(\Rfd(A_\ZZ)): V  \notin \cM\}$ is open. 
\end{lemma}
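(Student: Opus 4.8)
The plan is to show that $Z(\{V\})$ is clopen by exhibiting a representation $W$ with $Z(\{W\}) = Z(\{V\})^c$, so that $Z(\{V\})^c$ is itself closed. The natural candidate is $W := K_{\supp(V)^c}$, the interval-type representation supported on the complement of $\supp(V)$ (with all structure maps zero, or equivalently $K'_{\supp(V)^c}$ in the notation of Proposition~\ref{lemma:KsuppViffV}). Indeed, for a prime $\otimes$-ideal $\cM$, Proposition~\ref{lemma:KsuppViffV} tells us that $V \in \cM$ if and only if $K_{\supp(V)} \in \cM$, and similarly $W = K_{\supp(V)^c} \in \cM$ if and only if $K_{\supp(V)^c} \in \cM$ (trivially, but more usefully we may replace $W$ by $K_{\supp(V)^c}$ directly). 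So the assertion ``$V \notin \cM$'' is equivalent to ``$K_{\supp(V)} \notin \cM$''.

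The key step is then the dichotomy: for every $\cM \in \Spc(\Rfd(A_\ZZ))$, exactly one of $K_{\supp(V)}$, $K_{\supp(V)^c}$ lies in $\cM$. First, at least one lies in $\cM$: since $K_{\supp(V)} \otimes K_{\supp(V)^c} = K_{\supp(V) \cap \supp(V)^c} = K_\emptyset = 0 \in \cM$ and $\cM$ is prime, one of the two factors is in $\cM$. Second, not both lie in $\cM$: if $K_{\supp(V)} \in \cM$ and $K_{\supp(V)^c} \in \cM$, then the extension-closure argument of Proposition~\ref{lemma:KsuppViffV} (the portion showing $K_{\supp(V)^c} \in \cM \Rightarrow K_\ZZ \in \cM$) forces $K_\ZZ \in \cM$, hence $\cM = \Rfd(A_\ZZ)$, contradicting properness. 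This is exactly where the bounded-path-length hypothesis is used, and it is the only nontrivial input; I would simply cite Proposition~\ref{lemma:KsuppViffV} rather than reprove it.

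Putting this together: $\cM \in Z(\{V\})$ iff $V \notin \cM$ iff $K_{\supp(V)} \notin \cM$ iff (by the dichotomy) $K_{\supp(V)^c} \in \cM$ iff $K_{\supp(V)^c} \notin Z(\{K_{\supp(V)^c}\})$... no — rather, iff $\cM \notin Z(\{K_{\supp(V)^c}\})$, since $Z(\{K_{\supp(V)^c}\}) = \{\cM : K_{\supp(V)^c} \notin \cM\}$. Hence
\[
Z(\{V\}) = \Spc(\Rfd(A_\ZZ)) \setminus Z(\{K_{\supp(V)^c}\}),
\]
which is the complement of a basic closed set, hence open. Therefore $Z(\{V\})$ is both closed (by Proposition~\ref{ZAISKI}) and open. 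The main obstacle is conceptual rather than technical — recognizing that bounded path length is precisely what makes the complementary interval representation ``detect'' the complementary open set — and once Proposition~\ref{lemma:KsuppViffV} is invoked, the remaining argument is a short formal manipulation.
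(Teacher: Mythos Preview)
Your proposal is correct and follows essentially the same route as the paper: reduce $Z(\{V\})$ to $Z(\{K_{\supp V}\})$ via Proposition~\ref{lemma:KsuppViffV}, establish the dichotomy that exactly one of $K_{\supp V}$, $K_{\ZZ\setminus\supp V}$ lies in each prime $\cM$, and conclude $Z(\{V\})^c = Z(\{K_{\ZZ\setminus\supp V}\})$. The only cosmetic difference is in the ``not both'' step: the paper argues that if both lie in $\cM$ then their direct sum does (extension closure), and then applies the \emph{statement} of Proposition~\ref{lemma:KsuppViffV} to that direct sum (whose support is $\ZZ$) to get $K_\ZZ\in\cM$; you instead invoke the internal extension argument from the proof of Proposition~\ref{lemma:KsuppViffV}, which also works but is slightly less clean.
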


\begin{proof}
For any prime $\otimes$-ideal $\cM$ in $\Spc(\Rfd(A_\ZZ))$, the representation $K_{\supp{V}}$ is in $\cM$ if and only if $K_{\ZZ \setminus \supp{V}}$ is not in $\cM$. This is because if $K_{\supp{V}} \in \cM$ and $K_{\ZZ \setminus \supp{V}} \in \cM$, then we have $K_{ \supp{V}}\oplus K_{\ZZ \setminus \supp{V}} \in \cM $. It means $K_{\ZZ}= K_{\supp(K_{ \supp{V}}\oplus K_{\ZZ \setminus \supp{V}})} \in \cM$ by Lemma \ref{lemma:KsuppViffV}. This contradicts that $\cM$ is a prime $\otimes$-ideal. Thus $K_{\supp{V}}$ or $K_{\ZZ \setminus \supp{V}}$ is not in $\cM$. Also, one of the representations $K_{\ZZ \setminus \supp{V}}$ and $K_{\supp{V}}$ is included. This is because $K_{\supp{V}} \otimes K_{\ZZ \setminus \supp{V}} =0$ in $\cM$ and since $\cM$ is the prime $\otimes$-ideal, either $K_{\supp{V}}$ or $K_{\ZZ \setminus \supp{V}}$ is in $\cM$. Thus we have shown that the representation $K_{\supp{V}}$ is in $\cM$ if and only if $K_{\ZZ \setminus \supp{V}}$ is not in $\cM$.

Now by Lemma \ref{lemma:KsuppViffV}, we have the equation $Z(\{V\}):=\{\cM: V\notin \cM \} =\{\cM: K_{\supp{V}}\notin \cM \}=:Z (\{K_{\supp{V}}\})$. Thus we have 
\begin{align*}
Z(\{V\})^c  
&=Z(\{K_{\supp{V}}\})^c \\
&:= \{\cM\in \Spc(\Rfd(A_\ZZ)): K_{\supp{V}}  \notin \cM\}^c\\
&=\{\cM\in \Spc(\Rfd(A_\ZZ)): K_{\supp{V}}  \in \cM\} \\ 
&=\{\cM\in \Spc(\Rfd(A_\ZZ)): K_{\ZZ \setminus \supp{V}}  \notin \cM\} \\
&\qquad \qquad(\because K_{\supp(V)}\in \cM \ \text{iff} \  K_{\ZZ\setminus \supp(V)}\notin \cM) \\
&=: Z(\{K_{\ZZ \setminus \supp{V}}\}). 
\end{align*}
Therefore $Z(\{V\})$ is open.
\end{proof}

\begin{corollary}\label{Hdff}
Consider the tensor category $\Rfd(A_\ZZ)$, where $A_\ZZ$ is a quiver of type $A$ with bounded path length. %
Then, the topological space $\Spc(\Rfd(A_\ZZ))$ is a Hausdorff space.
\end{corollary}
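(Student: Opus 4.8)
The plan is to read off the Hausdorff property directly from Lemma~\ref{Clopen} and Proposition~\ref{lemma:KsuppViffV}. Lemma~\ref{Clopen} tells us that every basic closed set $Z(\{V\})$ is in fact clopen, and Proposition~\ref{lemma:KsuppViffV} lets us replace an arbitrary representation by the interval representation on its support without changing membership in a prime $\otimes$-ideal. Together these say that any two distinct prime $\otimes$-ideals can be separated by a clopen set, and a space in which two distinct points always have such a separating clopen set is automatically Hausdorff (the clopen set and its complement are disjoint open neighbourhoods).

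First I would take two distinct points $\cM_1 \neq \cM_2$ of $\Spc(\Rfd(A_\ZZ))$. Since a prime $\otimes$-ideal is a full subcategory, it is determined by its class of objects, so $\cM_1 \neq \cM_2$ means there is a representation lying in one of them but not the other; after relabelling we may assume there is $V \in \Rfd(A_\ZZ)$ with $V \in \cM_1$ and $V \notin \cM_2$. By Proposition~\ref{lemma:KsuppViffV} this is equivalent to $K_{\supp(V)} \in \cM_1$ and $K_{\supp(V)} \notin \cM_2$.

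Next I would set $U := Z(\{K_{\supp(V)}\}) = \{\cM : K_{\supp(V)} \notin \cM\}$. By construction $\cM_2 \in U$ and $\cM_1 \notin U$, i.e. $\cM_1 \in U^c$. The set $U$ is closed by Definition~\ref{ZarTopTen} and open by Lemma~\ref{Clopen} (which moreover identifies $U^c$ with $Z(\{K_{\ZZ \setminus \supp(V)}\})$), so both $U$ and $U^c$ are open. Since $U \cap U^c = \emptyset$, these are disjoint open neighbourhoods of $\cM_2$ and $\cM_1$ respectively, and therefore $\Spc(\Rfd(A_\ZZ))$ is Hausdorff.

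I do not expect a genuine obstacle here: the real content is already contained in Lemma~\ref{Clopen} and Proposition~\ref{lemma:KsuppViffV}, both of which use the bounded-path-length hypothesis. The only points needing a little care are the reduction "distinct prime $\otimes$-ideals differ on some object, hence, passing to supports, on some interval representation $K_{\supp(V)}$", and checking that $U$ and $U^c$ genuinely separate the two chosen ideals; the remainder is the formal fact that a space with a separating clopen set for each pair of points is Hausdorff.
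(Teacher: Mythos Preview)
Your proposal is correct and follows essentially the same argument as the paper: pick $V$ in one prime $\otimes$-ideal but not the other, and use that $Z(\{V\})$ is clopen by Lemma~\ref{Clopen} to separate the two points. The only difference is that you additionally pass from $V$ to $K_{\supp(V)}$ via Proposition~\ref{lemma:KsuppViffV}, which is harmless but unnecessary, since Lemma~\ref{Clopen} already applies to an arbitrary $V$.
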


\begin{proof}
Let $\cM_1$ and $\cM_2$ be different prime $\otimes$-ideals. Then there exists a representation $V$ such that $V$ is in $\cM_1 \setminus \cM_2$. Then  $\cM_2 \in Z(\{V\})$ and  $\cM_1 \in Z(\{V\})^c$. By Lemma \ref{Clopen}, the closed set $Z(\{V\})$ is open. Thus $\Spc(\Rfd(A_\ZZ))$ is a Hausdorff space.
\end{proof}

%\begin{definition}[Zariski topology on a spectrum of a ring]\ \\
%Let $R$ be a commutative ring and $I$ be an ideal. We denote by $V(I)$ the following subset of $\Spc(R)$:
%$$
%V(I):=\{\cP \in \Spc(R): I \subseteq\ \cP \},
%$$
%the collection $\{V(I): I \text{ is an ideal} \}$ are the closed subsets of a topology on $\Spc(R)$, called the \emph{Zariski topology}. Its has a basis of opens consisting of the $D_f:=\{\cP \in \Spc(R): f\notin \cP \}$ where $f\in R$.
%\end{definition}

\begin{lemma}\label{CPT}
Let $R$ be a commutative ring. Then $\Spc(R)$ is compact as the Zariski topology.
\end{lemma}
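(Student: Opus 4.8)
The plan is to prove this by the standard open-cover argument, working with the \emph{basic open sets}
\[
D(f) := \{\mathfrak{p} \in \Spc(R) : f \notin \mathfrak{p}\}, \qquad f \in R,
\]
which form a basis of the Zariski topology, their complements being the closed sets $V(\mathfrak{a}) = \{\mathfrak{p} : \mathfrak{a} \subseteq \mathfrak{p}\}$ indexed by the ideals $\mathfrak{a}\subseteq R$. First I would reduce to covers by basic opens: given an arbitrary open cover of $\Spc(R)$, refine it by writing each member as a union of sets $D(f)$; a finite subcover of the refinement immediately produces a finite subcover of the original cover. So it suffices to extract a finite subcover from a cover of the form $\Spc(R) = \bigcup_{i \in I} D(f_i)$.

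Next I would translate this into an ideal-theoretic statement. Taking complements, $\bigcup_{i\in I} D(f_i) = \Spc(R)$ is equivalent to $\bigcap_{i\in I} V(f_i) = \emptyset$; and since $\bigcap_{i\in I} V(f_i) = V(\mathfrak{a})$ where $\mathfrak{a}$ is the ideal generated by $\{f_i : i\in I\}$, the cover condition says precisely that no prime ideal of $R$ contains $\mathfrak{a}$. Because every proper ideal of $R$ is contained in some maximal — hence prime — ideal, this forces $\mathfrak{a} = R$. Therefore $1 \in \mathfrak{a}$, so there are a finite subset $\{i_1,\dots,i_n\}\subseteq I$ and elements $a_1,\dots,a_n \in R$ with $1 = \sum_{j=1}^{n} a_j f_{i_j}$; equivalently $(f_{i_1},\dots,f_{i_n}) = R$, i.e.\ $\bigcup_{j=1}^{n} D(f_{i_j}) = \Spc(R)$. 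This is the desired finite subcover, completing the proof.

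There is no genuine obstacle here: the argument is entirely formal once one invokes the existence of a maximal ideal above any proper ideal (Zorn's lemma). The only point worth flagging is that ``compact'' is to be read as the finite-subcover property, i.e.\ quasi-compactness — for a general commutative ring $R$ the space $\Spc(R)$ with its Zariski topology need not be Hausdorff, although in the case relevant to us, $R = \prod_{i\in\ZZ}(\ZZ/2\ZZ)$, it happens to be compact Hausdorff, being the Stone space of the Boolean algebra $\cP(\ZZ)$.
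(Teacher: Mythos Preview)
Your argument is correct and is precisely the standard proof; the paper itself does not give a proof but simply refers the reader to Atiyah--Macdonald, Chapter~1, Exercise~17(v), where exactly this basic-open-cover argument is intended. In other words, you have written out what the paper's citation points to, so there is no divergence in approach.
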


\begin{proof}
We can refer to \cite{atiyah1994introduction} Chapter 2, Exercise 17 (v).
\end{proof}

We rewrite the statement of Proposition \ref{homeo}  again. 

\begin{propositionNoNumber}[\ref{homeo}] \ \\
Let $\cP(\ZZ)$ be the power set of $\ZZ$ and consider it as a Boolean ring (isomorphic to $(\prod_{i\in \ZZ} (\ZZ/{2\ZZ}))$ by Example \ref{example:29}). Then we have a homeomorphism between $\Spc (\RAZ)$ and $\Spc(\cP(\ZZ))$, where both sets are equipped with their respective Zariski topologies.
\end{propositionNoNumber}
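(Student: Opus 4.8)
The plan is to build the homeomorphism out of the set-theoretic bijection $\phi \colon \Spc(\RAZ) \to \Spc(\cP(\ZZ))$ already constructed in Proposition~\ref{lemma:31}, and then to check that both $\phi$ and $\phi^{-1} = \psi$ are continuous. Since both spaces are equipped with Zariski topologies generated by basic closed sets, it suffices to check that the preimage of a basic closed set is closed on each side. On the $\Spc(\cP(\ZZ))$ side, recall from Example~\ref{example:29} and Proposition~\ref{proposition:aaa} that $\cP(\ZZ)$ as a Boolean ring is $\prod_{i\in\ZZ}(\ZZ/2\ZZ)$, so its Zariski topology is the usual Zariski topology on $\Spec$ of that ring, with basic closed sets $V(f) = \{\mathfrak p : f \in \mathfrak p\}$ for $f \in \cP(\ZZ)$; equivalently these are indexed by subsets $I \subseteq \ZZ$, with $\mathfrak p$ corresponding to a prime ideal $\cQ$ of the Boolean algebra via Proposition~\ref{proposition:aaa}(iv).

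The key computation is to match up basic closed sets across $\phi$. I would first observe, using Lemma~\ref{Clopen} and Proposition~\ref{lemma:KsuppViffV}, that on $\Spc(\RAZ)$ every basic closed set $Z(\{V\})$ equals $Z(\{K_{\supp V}\})$, so the topology on $\Spc(\RAZ)$ is generated by the sets $Z(\{K_I\})$ for $I \in \cP(\ZZ)$. Then I would check the identity
\[
\phi\bigl(Z(\{K_I\})\bigr) = \{\cQ \in \Spc(\cP(\ZZ)) : I \notin \cQ\},
\]
which follows directly from the definitions: $K_I \notin \cM \iff \supp(K_I) = I \notin \phi(\cM) = \{\supp V : V \in \cM\}$, the forward implication using Proposition~\ref{lemma:KsuppViffV} to promote $I \in \phi(\cM)$ back to $K_I \in \cM$. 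The right-hand side $\{\cQ : I \notin \cQ\}$ is precisely the basic open complement of the basic closed set $V(I)$ in $\Spc(\cP(\ZZ))$ under the ring identification (here one uses that $\{\cQ : I \notin \cQ\} = D(I)$ for the idempotent $I$, which is both open and closed). Since $\phi$ carries a basis of the topology on the left bijectively onto a basis of the topology on the right, $\phi$ is a homeomorphism.

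Finally I would record the consequence: combining with Lemma~\ref{CPT} (so $\Spc(\cP(\ZZ)) = \Spec(\prod_{i\in\ZZ}\ZZ/2\ZZ)$ is compact) and Corollary~\ref{Hdff}, the space $\Spc(\RAZ)$ is a compact Hausdorff totally disconnected space, i.e.\ a Stone space, as one expects for the prime spectrum of a Boolean ring. The main obstacle I anticipate is purely bookkeeping rather than conceptual: one must be careful that the ``basic closed sets'' in Definition~\ref{ZarTopTen} on the tensor side really are clopen (which is exactly the content of Lemma~\ref{Clopen}, and this is where the bounded-path-length hypothesis enters, via Proposition~\ref{lemma:KsuppViffV}), and one must correctly translate between the Boolean-algebra description of $\Spc(\cP(\ZZ))$ used in Section~\ref{sec2} and the Boolean-ring / $\Spec$ description whose topology is the stated Zariski topology — checking that Proposition~\ref{proposition:aaa}(iv) is not merely a bijection of sets of primes but actually a homeomorphism, which is immediate once one notes it is inclusion-preserving in both directions and hence matches $V(\cdot)$ sets. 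No step requires a genuinely new idea beyond what Proposition~\ref{lemma:KsuppViffV} already supplies.
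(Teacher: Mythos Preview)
Your proposal is correct and rests on the same core computation as the paper: identifying $Z(\{V\})$ on the tensor side with the clopen set $\{\cQ : \supp(V) \notin \cQ\}$ on the Boolean side, via Proposition~\ref{lemma:KsuppViffV}. The packaging differs slightly. You argue symmetrically, showing that $\phi$ carries the clopen basis $\{Z(\{K_I\})\}_{I\in\cP(\ZZ)}$ bijectively onto the clopen basis $\{D(I)\}_{I\in\cP(\ZZ)}$ of $\Spec(\cP(\ZZ))$, which yields continuity of both $\phi$ and $\phi^{-1}$ at once; compactness and Hausdorffness then appear only as corollaries. The paper instead checks continuity of $\psi$ alone (via the same identity, written as $\psi^{-1}(Z(\{W\})) = D_{\ZZ\setminus\supp(W)}^{\,c}$) and then invokes the standard fact that a continuous bijection from a compact space (Lemma~\ref{CPT}) to a Hausdorff space (Corollary~\ref{Hdff}) is automatically a homeomorphism. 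Your route is marginally more self-contained; the paper's route saves one direction of the continuity check at the cost of importing a general-topology lemma. Neither requires anything beyond what you have already identified.
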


\begin{proof}
We will show that the bijective map 
\begin{align*}
    \psi:  \Spc(\cP({\ZZ})) 
&\to \Spc(\Rfd(A_\ZZ)), \\
\cQ 
&\mapsto \{V\in \Rfd(A_\ZZ) : \supp(V)\in \cQ \}
\end{align*}
defined in Proposition \ref{lemma:31} is a homeomorphism. Since we know that $\Spc(\cP(\ZZ))=\Spc(\prod_{i\in \ZZ} (\ZZ/{2\ZZ}))$ is compact (see Lemma \ref{CPT}) and $\Spc(\Rfd(A_\ZZ))$ is a Hausdorff space (see Lemma \ref{Hdff}), it is enough to show that the bijective map $\psi:  \Spc(\cP({\ZZ})) \to \Spc(\Rfd(A_\ZZ))$ is continuous (a continuous bijective map from compact space to a Hausdorff space is homeomorphism, for example, \cite[Proposition 13.26]{Topsp} for the proof).

Let $Z( \cS)=\bigcap_{W\in  \cS}Z(\{W\})$ be a closed set of $\Spc(\Rfd(A_\ZZ))$. Then we obtain $\psi^{-1}(Z( \cS))=\bigcap_{W\in  \cS}{\psi^{-1}(Z(\{W\}))}$. For each $\psi^{-1}(Z(\{W\}))$, we have the equation 

\begin{align*}
    \psi^{-1}(Z(\{W\}))
&=\psi^{-1}(\{\cM\in \Spc(\Rfd(A_\ZZ)): W  \notin \cM\})\\
&=\psi^{-1}(\{\cM\in \Spc(\Rfd(A_\ZZ)): K_{\supp(W)}  \notin \cM\})  \qquad ( \because \text{by Proposition \ref{lemma:KsuppViffV})} \\
&=\{\psi^{-1}(\cM) \in \Spc(\mathcal{P(\ZZ)}):   K_{\supp(W)}  \notin \cM\}\\
&=\{\psi^{-1}(\cM) \in \Spc(\mathcal{P(\ZZ)}): \supp(W) \notin \psi^{-1}(\cM) \}\\
&=\{\cQ \in \Spc(\mathcal{P(\ZZ)}): \supp(W) \notin \cQ \} \qquad \qquad   \quad( \text{Replace $\psi^{-1}(\cM)$ with $\cQ$}.)\\
&=\{\cQ \in \Spc(\mathcal{P(\ZZ)}): \ZZ \setminus \supp(W) \in \cQ \} \qquad \qquad (\because \cQ \  \text{is a prime ideal})\\ 
&=\{\cQ \in \Spc(\mathcal{P(\ZZ)}): \ZZ \setminus \supp(W) \notin \cQ \}^c\\
&={D_{\ZZ \setminus \supp(W)}}^c.\\
\end{align*}
Since $D_{\ZZ \setminus \supp(W)}$ is open, the complement ${D_{\ZZ \setminus \supp(W)}}^c=\psi^{-1}(Z(\{W\}))$ is a closed set. Thus we can conclude $ \psi^{-1}(Z( \cS))=\bigcap_{W\in  \cS}{\psi^{-1}(Z(\{W\}))}$ is a closed set and therefore the map $\psi$ is a homeomorphism.

\end{proof}

\subsection{About the representations of the quiver $\vec{\AZ}$}

In the proof of Proposition \ref{lemma:KsuppViffV}, we need the condition 
$$ \text{the length of paths are bounded.} $$
Without it, we can not prove $K_{\supp(V)}\in \cM$ if and only if $V\in \cM$ for any $V\in \Rfd(A_\ZZ)$ and $\cM\in \Spc ( \Rfd (A_\ZZ))$. For example, there may be a representation such that its support is $\ZZ$, but the representation is contained in a prime $\otimes$-ideal. We will provide such a representation when $\AZ$ is the quiver  
$$
 \dots\longrightarrow \diamond
    \longrightarrow \diamond
    \longrightarrow \diamond
    \longrightarrow \diamond
    \longrightarrow \diamond
    \longrightarrow \diamond
    \longrightarrow\dots.
$$
which will be denote by $\VAZ$. The proof seems to generalise, to any quiver of type $A$ containing an unbounded path, but we restrict our attention to $\VAZ$.

Using this, we show that the map
\begin{align*}
    \psi:  \Spc(\cP({\ZZ})) 
&\to \Spc(\Rfd(A_\ZZ)), \\
\cQ 
&\mapsto \{V\in \Rfd(A_\ZZ) : \supp(V)\in \cQ \}
\end{align*}
which appeared in the proof of Proposition \ref{lemma:31} is injective but not surjective.

We will prepare some lemmas and definitions to show the existence of the representation whose support is $\ZZ$.

\begin{definition} \label{generatedideal}
(An ideal generated by a set) \ \\
Let $Q$ be a quiver and $S$ a subset of objects of $\Rfd(Q)$ which contains the representation $0$. %
Set $S_{-1} := S$, and for $n \geq 0$ define full subcategories $S_n, S_n', S_n''$ of $\Rfd(Q)$ inductively as follows.
$$
S_n:= \left \{V\in \Rfd(Q) : 
\begin{array}{c}
\text{ there exists an exact sequence } \\
0 \  \to V' \to V \to V'' \to 0 \\
\text{ with }\ V',\ V'' \in S_{n-1} 
\end{array} \right \},
$$
$$
S'_n:=\{W\otimes V : W \in  \Rfd(Q)  , \ V \in S_n \},
$$
$$
S''_n:=\bigcup_{V,W \in S'_n}  \{ \Ker(f),\  \Coker (f) :  V \xrightarrow{f} W \}.
$$
%Similarly, define full subcategories $S_1$, $S'_1$ and $S''_1$ as
%$$
%S_1:=\{V\in \RAZ : 0   \to V' \to V \to V'' \to 0  \text{exact sequence with}\ V',\ V'' \in S''_0 ,\}
%$$
%
%$$
%S'_1:=\{W\otimes V : W \in  \RAZ  , \ V \in S_1 \},
%$$
%$$
%S''_1:=\bigcup_{V,W \in S'_1}  \{ \Ker(f), \Coker (f) :  V \xrightarrow{f} W \}.
%$$
%
%Inductively, we will obtain full subcategories $S_n$, $S'_n$ and $S''_n$ for $n \in \ZZ_{\geq 0}$ and 
We then define an ideal $\langle S \rangle$  of $\Rfd(Q)$ by
$$
\langle S \rangle:=\bigcup_{n\in \ZZ_{\geq 0}} S_n
$$
and call it the $\otimes$-$ideal$ $generated$ $by$ $S$. Here, we have 

$$
S \subseteq S_0 \subseteq S'_0 \subseteq S''_0 \subseteq S_1 \subseteq \dots  \subseteq S_n \subseteq S'_n \subseteq S''_n \subseteq S_{n+1} \subseteq \dots \subseteq \langle S \rangle.
$$
\end{definition} 

\begin{proposition}
The $\langle S \rangle:=\bigcup_{n\in \ZZ_{\geq 0}} S_n$ defined above
is a $\otimes$-ideal of $\Rfd(Q)$. In fact, it is the smallest $\otimes$-ideal containing the set $S$.
\end{proposition}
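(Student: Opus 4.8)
The plan is to verify the three defining properties of a $\otimes$-ideal for $\langle S \rangle = \bigcup_{n \geq 0} S_n$, namely that it is coherent (contains $0$, closed under extensions, closed under kernels and cokernels of morphisms between its objects) and that $\langle S \rangle \otimes \Rfd(Q) \subseteq \langle S \rangle$; and then to show minimality by an induction argument. Throughout I will use the chain of inclusions
$$
S \subseteq S_0 \subseteq S'_0 \subseteq S''_0 \subseteq S_1 \subseteq \dots \subseteq S_n \subseteq S'_n \subseteq S''_n \subseteq S_{n+1} \subseteq \dots \subseteq \langle S \rangle
$$
recorded in Definition~\ref{generatedideal}, together with the key observation that any \emph{finite} collection of objects of $\langle S \rangle$ lies in a common $S_n$ (since the $S_n$ form an increasing chain and each object appears at some finite stage). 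This ``finite objects sit at a common stage'' principle is what makes all the closure verifications go through.

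First I would check $0 \in \langle S \rangle$, which is immediate since $0 \in S = S_{-1} \subseteq S_0$. Next, closure under extensions: given an exact sequence $0 \to V' \to V \to V'' \to 0$ with $V', V'' \in \langle S \rangle$, pick $n$ with $V', V'' \in S_n$ (using that both lie at some finite stage and the chain is increasing); then $V \in S_{n+1} \subseteq \langle S \rangle$ by the very definition of $S_{n+1}$. For closure under kernels and cokernels: given $f : V \to W$ with $V, W \in \langle S \rangle$, choose $n$ with $V, W \in S_n$; then $V, W \in S'_n$ (taking the tensor factor $W$ in the definition of $S'_n$ to be the monoidal unit $K_Q$, so that $K_Q \otimes V \cong V$), hence $\Ker(f), \Coker(f) \in S''_n \subseteq S_{n+1} \subseteq \langle S \rangle$. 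For the ideal property, given $V \in \langle S \rangle$ and arbitrary $W \in \Rfd(Q)$, choose $n$ with $V \in S_n$; then $W \otimes V \in S'_n \subseteq \langle S \rangle$. This establishes that $\langle S \rangle$ is a $\otimes$-ideal.

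For minimality, let $\cM$ be any $\otimes$-ideal of $\Rfd(Q)$ with $S \subseteq \cM$; I must show $S_n \subseteq \cM$ for every $n$, whence $\langle S \rangle \subseteq \cM$. I would argue by induction on $n$. The base case $S_{-1} = S \subseteq \cM$ holds by hypothesis. For the inductive step, assuming $S_{n-1} \subseteq \cM$: an object of $S_n$ is the middle term of an exact sequence with outer terms in $S_{n-1} \subseteq \cM$, so it lies in $\cM$ because $\cM$ is closed under extensions; hence $S_n \subseteq \cM$. Then $S'_n \subseteq \cM$ because $\cM$ is a $\otimes$-ideal (absorbs tensoring by arbitrary objects of $\Rfd(Q)$), and $S''_n \subseteq \cM$ because $\cM$ is closed under kernels and cokernels of morphisms between its own objects. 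Thus the whole chain lies in $\cM$, giving $\langle S \rangle \subseteq \cM$.

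I do not anticipate a genuine obstacle here: the argument is a routine ``closure under an inductively built operator'' verification. The one point that needs a little care — and the only place a reader might stumble — is making the ``a finite set of objects lies in a common $S_n$'' step fully explicit, and correctly invoking the monoidal unit $K_Q$ to see $S_n \subseteq S'_n$ (so that kernels and cokernels taken in the $S''_n$ step really do capture morphisms between arbitrary objects of $S_n$, not just between tensor products). Once those two bookkeeping points are spelled out, both the ``$\langle S \rangle$ is a $\otimes$-ideal'' direction and the ``it is the smallest such'' direction follow formally.
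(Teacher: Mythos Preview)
Your proposal is correct and follows essentially the same approach as the paper's proof: both verify closure under extensions, kernels/cokernels, and tensoring by finding a common stage $S_n$ for the finitely many objects involved, and both prove minimality by showing inductively that each $S_n$ (and hence each $S'_n$, $S''_n$) lies in any $\otimes$-ideal containing $S$. If anything, your version is slightly more careful in explicitly invoking the monoidal unit to justify $S_n \subseteq S'_n$ before applying the kernel/cokernel step, a point the paper leaves implicit.
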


\begin{proof}
First we show that $\langle S \rangle$ is closed under extensions. Let $V'$ and $V''$ be representations in $\langle S \rangle$. Then, there exists $n\in \ZZ_{\geq 0}$ such that $V'$ and $V''$ are in $S_n$. Take an arbitrary exact sequence
$$
0 \to V' \to V \to V'' \to 0 ,
$$
and we have $V\in S_{n+1} \subseteq \langle S \rangle$. Thus $\langle S \rangle$ is closed under extensions. 

Next, we show that $\langle S \rangle$ is closed under kernels and cokernels. Let  $f: V \to W$ be a morphism in $\langle S \rangle$. Then, there exists $n\in \ZZ_{\geq 0}$ such that $V$ and $W$ are in $S_n$ and its kernels and cokernels are in $S_{n+1} \subseteq \langle S \rangle $.

Finally, we show that $\langle S \rangle$ is closed under tensor products. Let $V$ and $W$ be in $\Rfd(Q)$ and $\langle S \rangle$ respectively. There exists $n\in \ZZ_{\geq 0}$ such that $W \in S_n$.  We have $V\otimes W \in S_{n+1} \subseteq \langle S \rangle $. Therefore $\langle S \rangle$ is a $\otimes$-ideal of $\Rfd(Q)$.
 
Next, we will show that the $\langle S \rangle$ is the smallest $\otimes$-ideal containing $S$. Let $\cI$ be a $\otimes$-ideal containing $S$. By the construction of $S_n$ and the definition of $\otimes$-tensor ideal,  $S_n$ is included by $\cI$ for every $n\in \ZZ_{\geq0}$. Thus we have $\langle S \rangle=\bigcup_{n\in \ZZ_{\geq 0}} S_n \subseteq \cI$ and therefore $\langle S \rangle$ is the smallest $\otimes$-tensor ideal containing $S$.

\end{proof}

\begin{lemma} \label{KUKANNOYATU}
Let $[a,b]$ be an interval in $\ZZ$. If there exists a monomorphism 
$$
\phi:K_{[a,b]}\to \bigoplus_{\lambda \in \Lambda} K_{I_{\lambda}},
$$
then there exists $\lambda \in \Lambda$ such that $[a,b] \subseteq I_{\lambda}$ where $K_{[a,b]}$ and $\bigoplus_{\lambda \in \Lambda} K_{I_{\lambda}}$ are representations of $\Rfd(\Vec{A_{\ZZ}})$.

Similarly, if there exists an epimorphism
$$
\psi:\bigoplus_{\lambda \in \Lambda} K_{I_{\lambda}} \to K_{[a,b]},
$$
then there exists $\lambda \in \Lambda$ such that $[a,b] \subseteq I_{\lambda}$ where $K_{[a,b]}$ and $\bigoplus_{\lambda \in \Lambda} K_{I_{\lambda}}$ are representations of $\Rfd(\Vec{A_{\ZZ}})$.

\end{lemma}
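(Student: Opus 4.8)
The plan is to reduce everything to a vertexwise computation. First I would recall that in the abelian category $\Rfd(\VAZ)$ kernels and cokernels are formed vertex by vertex, so a monomorphism is exactly a pointwise injective morphism and an epimorphism is exactly a pointwise surjective one.

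For the monomorphism $\phi$: for each vertex $i\in[a,b]$ I would write the image of $1\in K=(K_{[a,b]})_i$ under $\phi_i$ as $(c_\lambda^{(i)})_{\lambda\in\Lambda}\in(\bigoplus_\lambda K_{I_\lambda})_i$ (only finitely many entries nonzero, and $c_\lambda^{(i)}=0$ whenever $i\notin I_\lambda$), and put $T_i:=\{\lambda\in\Lambda:c_\lambda^{(i)}\neq 0\}$. The key step is to feed the commutativity condition for $\phi$ along the arrow $\alpha\colon i\to i+1$ with $i,i+1\in[a,b]$: since $(K_{[a,b]})_\alpha=\mathrm{id}_K$ while $(K_{I_\lambda})_\alpha$ equals $\mathrm{id}_K$ when $\{i,i+1\}\subseteq I_\lambda$ and is $0$ otherwise, this condition reads $c_\lambda^{(i+1)}=(K_{I_\lambda})_\alpha(c_\lambda^{(i)})$ componentwise. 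Reading it off gives: if $c_\lambda^{(i+1)}\neq 0$ then $\{i,i+1\}\subseteq I_\lambda$ and $c_\lambda^{(i)}=c_\lambda^{(i+1)}$; hence $T_{i+1}\subseteq T_i$, so $T_a\supseteq T_{a+1}\supseteq\cdots\supseteq T_b$. Since $\phi$ is a monomorphism, $\phi_b$ is injective, so $T_b\neq\emptyset$; choosing $\lambda_0\in T_b$, the nesting forces $\lambda_0\in T_i$ for every $i\in[a,b]$, and $\lambda_0\in T_i$ implies $i\in I_{\lambda_0}$. Therefore $[a,b]\subseteq I_{\lambda_0}$.

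For the epimorphism $\psi$ I would run the dual computation: write $\psi_i$ through the scalars $d_\lambda^{(i)}\in K$ recording $\psi_i$ on the $\lambda$-th summand, put $U_i:=\{\lambda:d_\lambda^{(i)}\neq 0\}$, and use the commutativity condition along $i\to i+1$ to get $d_\lambda^{(i)}=d_\lambda^{(i+1)}\circ(K_{I_\lambda})_\alpha$. This time it says: $d_\lambda^{(i)}\neq 0$ forces $\{i,i+1\}\subseteq I_\lambda$ and $d_\lambda^{(i+1)}=d_\lambda^{(i)}$, so $U_i\subseteq U_{i+1}$ and $U_a\subseteq U_{a+1}\subseteq\cdots\subseteq U_b$. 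Surjectivity of $\psi_a$ gives $U_a\neq\emptyset$, and any $\lambda_0\in U_a$ then lies in every $U_i$, whence $[a,b]\subseteq I_{\lambda_0}$. (Alternatively, one can deduce the epimorphism case from the monomorphism case by applying the pointwise $K$-dual, an exact contravariant functor that exchanges monomorphisms and epimorphisms and, on pointwise finite-dimensional objects, sends a direct sum of interval representations to a direct sum of interval representations of the opposite quiver, which is again of type $A$.)

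These computations are routine; the only real subtlety is the bookkeeping over the possibly infinite index set $\Lambda$. One cannot simply project $\bigoplus_\lambda K_{I_\lambda}$ onto a single summand at one vertex, because the ``active'' summand may change from vertex to vertex; the monotonicity $T_{i+1}\subseteq T_i$ (resp. $U_i\subseteq U_{i+1}$) is exactly what lets one pin down a single $\lambda_0$ that works on all of $[a,b]$, and this monotonicity is where we use that every arrow of $\VAZ$ points in the same direction. For a general type-$A$ quiver containing the interval $[a,b]$ one would have to argue separately to the left and to the right of each source and sink inside $(a,b)$, but the conclusion is unchanged.
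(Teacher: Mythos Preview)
Your argument is correct and follows essentially the same idea as the paper's proof: both exploit that a morphism of representations must intertwine the structure maps, and that on the $\lambda$-summand the composed structure map from vertex $a$ to vertex $b$ is the identity if $[a,b]\subseteq I_\lambda$ and zero otherwise. The paper simply packages your step-by-step chain $T_b\subseteq T_{b-1}\subseteq\cdots\subseteq T_a$ into a single square comparing the composed map $(K_{[a,b]})_a\to(K_{[a,b]})_b$ (which is $\mathrm{id}_K$) with the composed map $(\bigoplus_\lambda K_{I_\lambda})_a\to(\bigoplus_\lambda K_{I_\lambda})_b$ (which, under the contradiction hypothesis that no $I_\lambda$ contains $[a,b]$, is zero), and observes that commutativity would force $\phi_b=0$, contradicting injectivity; this avoids your vertex-by-vertex bookkeeping but is the same computation in compressed form.
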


\begin{proof}
We will show the monomorphism case by contradiction. The other case can be shown in the same way.    

Assume that for any $\lambda \in \Lambda$, we have $[a,b] \nsubseteq I_{\lambda}$. Then we obtain non-commutative diagram 
\[
  \xymatrix{
    (K_{[a,b]})_a \ar[r]_-{\phi_a} \ar[d]_{id_{K}} & (\bigoplus_{\lambda \in \Lambda} K_{I_{\lambda}})_a \ar[d]_{0} \\
    (K_{[a,b]})_b \ar[r]_{\phi_b} & ( \bigoplus_{\lambda \in \Lambda} K_{I_{\lambda}})_b \ ,
  }
\]
and it is the contradiction. Therefore there must be $\lambda \in \Lambda$ such that $[a,b] \subseteq I_{\lambda}$. 
\end{proof}

\begin{lemma}\label{KUKANNOYATU2}
Let $I$ be an interval in $\ZZ$. If there exists a monomorphism 
$$
\phi:K_{I}\to \bigoplus_{\lambda \in \Lambda} K_{J_{\lambda}},
$$
then there exists $\lambda \in \Lambda$ such that $I \subseteq J_{\lambda}$ where $K_{I}$ and $\bigoplus_{\lambda \in \Lambda} K_{I_{\lambda}}$ are  representations of $\Rfd(\Vec{A_{\ZZ}})$.

Similarly, if there exists an epimorphism
$$
\psi:\bigoplus_{\lambda \in \Lambda} K_{J_{\lambda}} \to K_I,
$$
then there exists $\lambda \in \Lambda$ such that $I \subseteq J_{\lambda}$ where $K_{I}$ and $\bigoplus_{\lambda \in \Lambda} K_{I_{\lambda}}$ are the representations of $\Rfd(\Vec{A_{\ZZ}})$.
\end{lemma}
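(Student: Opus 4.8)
The plan is to argue directly, in the spirit of the proof of Lemma~\ref{KUKANNOYATU}, but replacing its use of the two endpoints of a \emph{finite} interval by a finiteness argument that singles out one index $\lambda\in\Lambda$ valid for the whole, possibly infinite, interval $I$. A naive reduction to Lemma~\ref{KUKANNOYATU} is not available: since all arrows of $\VAZ$ point one way, a nonzero morphism $K_{[a,b]}\to K_I$ forces $b=\sup I$, so when $I$ is unbounded one cannot restrict $\phi$ to finite interval subrepresentations of $K_I$.

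The first step is to record how paths act. For $a\le b$ the composite along the path $a\to a+1\to\cdots\to b$ acts on $K_I$ as $\mathrm{id}_K$ (as $[a,b]\subseteq I$) and on $\bigoplus_{\lambda}K_{J_\lambda}$ summand-wise, as $\mathrm{id}_K$ on the $\lambda$-th summand when $[a,b]\subseteq J_\lambda$ and as $0$ otherwise. Compatibility of $\phi$ with arrows (hence with paths), together with the fact that a monomorphism in $\Rfd(\VAZ)$ is injective at every vertex (kernels are computed vertex-wise), gives, for all $a\le b$ in $I$ and all $\lambda$, the identity $(\phi_b(1))_\lambda=(\phi_a(1))_\lambda$ if $[a,b]\subseteq J_\lambda$ and $(\phi_b(1))_\lambda=0$ otherwise, where $1$ is the chosen generator of $(K_I)_a=(K_I)_b=K$.

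Now fix $c\in I$ and let $\Lambda_c$ be the support of $\phi_c(1)\in\bigoplus_\lambda(K_{J_\lambda})_c$; it is finite (a vector in a direct sum has finite support --- for the monomorphism case point-wise finite-dimensionality is not even needed), nonempty (since $\phi_c$ is injective and $(K_I)_c=K\ne 0$), and contained in $\{\lambda:c\in J_\lambda\}$. Taking $b=c$ in the identity above shows each $\lambda\in\Lambda_c$ satisfies $[a,c]\subseteq J_\lambda$ for every $a\in I$ with $a\le c$, so $J_\lambda\supseteq I\cap(-\infty,c]$ for all $\lambda\in\Lambda_c$. Taking $a=c$ shows that, for $b\ge c$ in $I$, the support of $\phi_b(1)$ equals $\{\lambda\in\Lambda_c:[c,b]\subseteq J_\lambda\}$, which is nonempty by injectivity of $\phi_b$. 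As $b$ ranges over $I\cap[c,\infty)$ these sets form a decreasing chain of nonempty subsets of the finite set $\Lambda_c$, so their intersection contains some $\lambda_0$; then $J_{\lambda_0}\supseteq I\cap[c,\infty)$, and with the previous line $J_{\lambda_0}\supseteq I$.

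The epimorphism statement follows by the dual argument, using the linear functionals $\psi_c\colon(\bigoplus_\lambda K_{J_\lambda})_c\to K$ in place of the vectors $\phi_c(1)$ and reading the path identities on the source side: surjectivity of $\psi_c$ makes its support $\Lambda_c$ nonempty, every $\lambda\in\Lambda_c$ then automatically has $J_\lambda\supseteq I\cap[c,\infty)$, and a decreasing-chain argument on the supports of the $\psi_a$ for $a\le c$ in $I$ supplies one $\lambda_0$ with $J_{\lambda_0}\supseteq I\cap(-\infty,c]$, hence $J_{\lambda_0}\supseteq I$. Here point-wise finite-dimensionality is genuinely needed: a functional on the infinite direct sum $(\bigoplus_\lambda K_{J_\lambda})_c$ may have infinite support, so one needs $\{\lambda:c\in J_\lambda\}$ finite to know $\Lambda_c$ is finite. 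The main obstacle throughout is precisely this uniformity --- a single $J_\lambda$ need not contain any prescribed finite subinterval of $I$, which is why Lemma~\ref{KUKANNOYATU} cannot simply be invoked --- and it is the finiteness of $\Lambda_c$ combined with the finite intersection property that lets the finite-interval bounds be assembled into a bound for all of $I$.
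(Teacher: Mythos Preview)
Your argument is correct. Both your proof and the paper's hinge on the same finiteness idea---a decreasing chain of nonempty subsets of a finite index set must stabilise---but they implement it differently. The paper restricts the whole diagram to the finite subquiver on $I_n=[-n,n]$ (so that $K_I|_{I_n}=K_{I_n}$ and $K_{J_\lambda}|_{I_n}=K_{J_\lambda\cap I_n}$), applies Lemma~\ref{KUKANNOYATU} to obtain that $C_n:=\{\lambda:I_n\subseteq J_\lambda\}$ is nonempty, and then bounds $|C_n|$ by the (point-wise finite) dimension at a fixed vertex. You instead track the support $\Lambda_c$ of the single vector $\phi_c(1)$ and read off the containments directly from the path identities, bypassing Lemma~\ref{KUKANNOYATU} entirely. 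Your route is a bit more elementary and has the pleasant side effect that the monomorphism case does not use point-wise finite-dimensionality at all (finiteness of $\Lambda_c$ comes for free from the direct sum), whereas the paper invokes it in both cases. One small remark: your opening claim that a reduction to Lemma~\ref{KUKANNOYATU} is ``not available'' is too strong---there is indeed no nonzero morphism $K_{[a,b]}\to K_I$ when $I$ is unbounded above, but the paper's reduction is by restriction of representations to a subquiver rather than by precomposition, and that works without difficulty.
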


\begin{proof}
We will show the monomorphism case. The other case can be shown in the same way.

If $I$ is bounded, then we have already shown in Lemma \ref{KUKANNOYATU}. So we will assume that $I$ is not bounded.

In case of $I=\ZZ$, let $I_n$ be the interval $[-n,n]$ for $n\in \ZZ_{\geq0}$. By restricting the interval $I=\ZZ$ to $I_n$, we obtain the monomorphism
$$
\phi:K_{I_n}\to \bigoplus_{\lambda \in \Lambda} K_{J_{\lambda}\cap I_n}.
$$
By Lemma \ref{KUKANNOYATU}, there exists $\lambda \in \Lambda$ such that $I_n \subseteq J_{\lambda}\cap I_n$. Thus for any $n\in \ZZ_{\geq0}$ we obtain $I_n \subseteq J_{{\lambda_n}}$ for some $\lambda_n$. Thus the set $C_n:=\{J_{\lambda}: I_n \subseteq J_{\lambda},\  \lambda \in \Lambda  \} \subseteq \{J_{\lambda}: 0 \in J_{\lambda},\ \lambda \in \Lambda \}$ is not empty and finite because the cardinarity of the set $\{J_{\lambda}: 0 \in J_{\lambda},\  \lambda \in \Lambda \}$ is equal to or smaller than the finite dimension  dim$_K(\bigoplus_{\lambda \in \Lambda} K_{J_{\lambda}})_0$. Thus, for the descending sequence
$$  C_0 \supseteq C_1 \supseteq C_2 \supseteq C_3 \supseteq \cdots , 
$$
there exists $m \in \ZZ_{\geq 0}$ such that:
$$
C_m=C_{m+1}=\cdots.$$
We can conclude that there exists $\lambda \in \Lambda$ such that $I=\ZZ=J_{\lambda} \in \bigcap_{n\in \ZZ_{\geq0}}C_n$.   

If $I$ is bounded above and unbounded below, then there exists $a\in \ZZ$ such that $I=(-\infty, a]$. Let $I'_n$ be the interval $[a-n,a]$ for $n \in \ZZ_{\geq0}$. We can show that $C'_n:=\{J_{\lambda}: I'_n \subseteq J_{\lambda},\ \lambda \in \Lambda \}$ is not empty because we have a monomorphism
$$
\phi:K_{I'_n}\to \bigoplus_{\lambda \in \Lambda} K_{J_{\lambda}\cap I'_n},
$$
such that there exists $\lambda \in \Lambda$ which satisfies $I'_n \subset J_{\lambda}\cap I'_n \subseteq J_{\lambda}$
by Lemma \ref{KUKANNOYATU}. Also, we can show $C'_n\subseteq \{J_{\lambda}: a \in J_{\lambda},\  \lambda \in \Lambda \} $ is finite because $\{J_{\lambda}: a \in J_{\lambda},\  \lambda \in \Lambda \}$ is equal to or smaller that the dim$_K(\bigoplus_{\lambda \in \Lambda} K_{J_{\lambda}})_a$.  We will obtain the descending sequence that stops for a $m\in \ZZ_{\geq0}$
$$ C'_0 \supseteq C'_1 \supseteq C'_2 \supseteq C'_3 \supseteq \cdots \cdots \supseteq C'_m =C'_{m+1}= \cdots.
$$
We can conclude that there exists $\lambda \in \Lambda$ such that $I'=(-\infty,a]=J_{\lambda} \in \bigcap_{n\in \ZZ_{\geq0}}C'_n$.

In case of $I$ is bounded below and unbounded above, the proof is similar to the case that $I$ is bounded above and unbounded below.
\end{proof}

\begin{lemma}\label{KUKANNOYATU3}
Let $V$ and $W$ be representations of $\Rfd(\Vec{A_{\ZZ}})$ whose interval decomposition are $\bigoplus_{\lambda \in \Lambda } K_{I_{\lambda}}$ and $\bigoplus_{\lambda' \in \Lambda' } K_{J_{\lambda'}}$. If there exists a monomorphism
$$\phi: V \to W,
$$
then for any interval $I_{\lambda}$, there exists $\lambda' \in \Lambda'$ such that $I_{\lambda} \subseteq J_{\lambda'}$.

Similarly if there exists an epimorphism
$$\psi: V \to W,
$$
then for any interval $I_{\lambda}$, there exists $\lambda' \in \Lambda'$ such that $I_{\lambda} \subseteq J_{\lambda'}$.
\end{lemma}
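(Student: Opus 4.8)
The plan is to reduce both assertions to Lemma~\ref{KUKANNOYATU2} by restricting the given map to a single interval summand of $V$. First recall that by Theorem~\ref{proposition:allAreInterval} the decompositions $V\cong\bigoplus_{\lambda\in\Lambda}K_{I_\lambda}$ and $W\cong\bigoplus_{\lambda'\in\Lambda'}K_{J_{\lambda'}}$ exist, and that each $I_\lambda$ is an interval --- exactly the input Lemma~\ref{KUKANNOYATU2} wants. Moreover, since $V$ is point-wise finite-dimensional, at every vertex $n$ only finitely many of the $K_{I_\lambda}$ are nonzero, so for each fixed $\lambda$ the inclusion $\iota_\lambda\colon K_{I_\lambda}\hookrightarrow V$ and the projection $\pi_\lambda\colon V\twoheadrightarrow K_{I_\lambda}$ both exist (defined vertexwise), with $\iota_\lambda$ a split monomorphism and $\pi_\lambda$ a split epimorphism satisfying $\pi_\lambda\circ\iota_\lambda=\mathrm{id}$.

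For the monomorphism $\phi\colon V\to W$, fix $\lambda\in\Lambda$. The composite $\phi\circ\iota_\lambda\colon K_{I_\lambda}\to W=\bigoplus_{\lambda'\in\Lambda'}K_{J_{\lambda'}}$ is again a monomorphism, being a composite of monomorphisms, so the monomorphism half of Lemma~\ref{KUKANNOYATU2} supplies $\lambda'\in\Lambda'$ with $I_\lambda\subseteq J_{\lambda'}$; as $\lambda$ is arbitrary, the first claim follows. For the second claim the relevant map is an epimorphism $\psi\colon W\to V$ (this is the form needed to reach the stated conclusion $I_\lambda\subseteq J_{\lambda'}$, and it is the natural ``one summand at a time'' counterpart of the epimorphism half of Lemma~\ref{KUKANNOYATU}; note that for an epimorphism in the other direction the containment can genuinely fail, e.g.\ $K_{[0,1]}\twoheadrightarrow K_{[0,0]}$ in $\Rfd(\VAZ)$). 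Fixing $\lambda\in\Lambda$, the composite $\pi_\lambda\circ\psi\colon W=\bigoplus_{\lambda'\in\Lambda'}K_{J_{\lambda'}}\to K_{I_\lambda}$ is an epimorphism, being a composite of epimorphisms, and the epimorphism half of Lemma~\ref{KUKANNOYATU2} gives $\lambda'\in\Lambda'$ with $I_\lambda\subseteq J_{\lambda'}$, again for arbitrary $\lambda$.

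There is essentially no further obstacle: the interval combinatorics has all been carried out in Lemmas~\ref{KUKANNOYATU} and~\ref{KUKANNOYATU2}, and the present lemma is merely their packaging over the individual summands of $V$. The only point meriting a line of justification is the existence of the split maps $\iota_\lambda,\pi_\lambda$ when $\Lambda$ is infinite, which is precisely where point-wise finite-dimensionality enters; granting that, composing with a split monomorphism (resp.\ split epimorphism) visibly preserves monomorphisms (resp.\ epimorphisms), and the two invocations of Lemma~\ref{KUKANNOYATU2} go through unchanged.
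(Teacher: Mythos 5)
Your proof of the monomorphism half is exactly the paper's: compose the canonical inclusion $K_{I_\lambda}\hookrightarrow V$ with $\phi$ to get a monomorphism $K_{I_\lambda}\to\bigoplus_{\lambda'}K_{J_{\lambda'}}$ and invoke Lemma~\ref{KUKANNOYATU2}. For the epimorphism half the paper only says ``the other case can be shown in the same way,'' and you are right to flag that the statement as literally written (an epimorphism $\psi\colon V\to W$ with conclusion $I_\lambda\subseteq J_{\lambda'}$) is false: your example $K_{[0,1]}\twoheadrightarrow K_{[0,0]}$ in $\Rfd(\VAZ)$ is a genuine counterexample. The version the paper actually needs later (in Corollary~\ref{DirectsumTensor}) is that for an epimorphism $V\to W$ every interval of the \emph{target} is contained in some interval of the \emph{source}, which is exactly your relabelled form with $\psi\colon W\to V$; your argument for it (compose with the projection onto a summand of the target and apply the epimorphism half of Lemma~\ref{KUKANNOYATU2}) is correct and is surely what the author intends. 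One small quibble: the existence of the inclusion $\iota_\lambda$ and projection $\pi_\lambda$ for a single summand of a coproduct follows from the universal properties of $\oplus$ alone and does not require point-wise finite-dimensionality, so that remark is unnecessary; it does not affect the proof.
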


\begin{proof}
We will show the monomorphism case. The other case can be shown in the same way.

Since we have the natural monomorphism
$$
K_{I_{\lambda}}  \to \bigoplus_{\lambda \in \Lambda } K_{I_{\lambda}}\cong  V \xrightarrow{\phi} W \cong \bigoplus_{\lambda' \in \Lambda' } K_{J_{\lambda'}},   
$$
by Lemma \ref{KUKANNOYATU2}, there exists $\lambda' \in \Lambda'$ such that $I_{\lambda} \subseteq J_{\lambda'}$.
\end{proof}

\begin{corollary}\label{DirectsumTensor}
Let $V$ and $W$ be representations of $\Rfd(\Vec{A_{\ZZ}})$ whose interval decomposition are $\bigoplus_{\lambda \in \Lambda } K_{I_{\lambda}}$ and $\bigoplus_{\lambda' \in \Lambda' } K_{J_{\lambda'}}$. If there exists a monomorphism (resp. epimorphism)
$$\phi: V \to W,
$$
then $V$ (resp. $W$) is a direct summund of $V\otimes W$.
\end{corollary}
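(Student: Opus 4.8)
The plan is to reduce the statement to elementary bookkeeping with interval decompositions, building on Lemma~\ref{KUKANNOYATU3}. The one preliminary computation I need is the effect of $\otimes$ on interval representations of $\VAZ$: since the monoidal structure on $\Rfd(\VAZ)$ is computed vertex by vertex (and arrow by arrow) over $K$, a direct check gives $K_I\otimes K_J\cong K_{I\cap J}$ for any intervals $I,J\subseteq\ZZ$, because the tensor product is one-dimensional exactly over the vertices of $I\cap J$, has identity structure maps exactly over the arrows of $I\cap J$, and $I\cap J$ is again an interval. Moreover $\otimes$ commutes with arbitrary direct sums --- at each fixed vertex only finitely many summands are nonzero by point-wise finite-dimensionality, and $\otimes_K$ distributes over direct sums of $K$-vector spaces --- so from the interval decompositions $V\cong\bigoplus_{\lambda\in\Lambda}K_{I_\lambda}$ and $W\cong\bigoplus_{\lambda'\in\Lambda'}K_{J_{\lambda'}}$ I obtain $V\otimes W\cong\bigoplus_{(\lambda,\lambda')\in\Lambda\times\Lambda'}K_{I_\lambda\cap J_{\lambda'}}$.

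Next, in the monomorphism case I would apply Lemma~\ref{KUKANNOYATU3}: the monomorphism $\phi\colon V\to W$ guarantees, for every $\lambda\in\Lambda$, an index $f(\lambda)\in\Lambda'$ with $I_\lambda\subseteq J_{f(\lambda)}$, so that $I_\lambda\cap J_{f(\lambda)}=I_\lambda$ and hence $K_{I_\lambda\cap J_{f(\lambda)}}=K_{I_\lambda}$. Since $\lambda\mapsto(\lambda,f(\lambda))$ is injective, the representations $K_{I_\lambda\cap J_{f(\lambda)}}$ ($\lambda\in\Lambda$) appear as pairwise distinct summands of the displayed decomposition of $V\otimes W$; their internal direct sum is therefore a direct summand of $V\otimes W$, and it is isomorphic to $\bigoplus_{\lambda\in\Lambda}K_{I_\lambda}\cong V$. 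The epimorphism case is symmetric: composing $\psi\colon V\to W$ with the projection of $W$ onto each summand $K_{J_{\lambda'}}$ yields an epimorphism $\bigoplus_{\lambda\in\Lambda}K_{I_\lambda}\to K_{J_{\lambda'}}$, and Lemma~\ref{KUKANNOYATU2} then produces $g(\lambda')\in\Lambda$ with $J_{\lambda'}\subseteq I_{g(\lambda')}$; the injective assignment $\lambda'\mapsto(g(\lambda'),\lambda')$ now carves out a direct summand of $V\otimes W$ isomorphic to $W$.

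I do not anticipate a genuine obstacle: once the formula $K_I\otimes K_J=K_{I\cap J}$ is recorded, the corollary is essentially a repackaging of Lemma~\ref{KUKANNOYATU3} (and Lemma~\ref{KUKANNOYATU2}). The only points that deserve a little care are (a) the compatibility of $\otimes$ with the possibly infinite interval decompositions, which is handled by point-wise finite-dimensionality as above, and (b) checking that the chosen index set maps injectively into $\Lambda\times\Lambda'$, so that one extracts an honest direct summand rather than a mere subrepresentation --- this is immediate, since the first coordinate of $(\lambda,f(\lambda))$ (resp.\ the second coordinate of $(g(\lambda'),\lambda')$) already separates the indices.
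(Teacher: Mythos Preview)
Your proposal is correct and follows essentially the same route as the paper: compute $V\otimes W\cong\bigoplus_{(\lambda,\lambda')}K_{I_\lambda\cap J_{\lambda'}}$, then use Lemma~\ref{KUKANNOYATU3} (or Lemma~\ref{KUKANNOYATU2} in the epi case) to select, for each $\lambda$, an index with $I_\lambda\cap J_{f(\lambda)}=I_\lambda$ and read off $V$ as a direct summand. If anything, your write-up is more careful than the paper's, since you make explicit the injectivity of $\lambda\mapsto(\lambda,f(\lambda))$ that guarantees the selected summands are genuinely distinct.
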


\begin{proof}
Now we have $V\otimes W \cong (\bigoplus_{\lambda \in \Lambda } K_{I_{\lambda}}) \otimes (\bigoplus_{\lambda' \in \Lambda' } K_{J_{\lambda'}})
\cong \bigoplus_{\lambda \in \Lambda, \lambda' \in \lambda'} K_{I_{\lambda}} \otimes K_{J_{\lambda'}}
=\bigoplus_{\lambda \in \Lambda, \lambda' \in \lambda'} K_{I_{\lambda} \cap J_{\lambda'}}$.

If the map $\phi$ is a monomorphism, for any $\lambda \in \Lambda$ there exists $\lambda' \in \Lambda'$ such that $I_{\lambda} \subseteq J_{\lambda'}$ by Lemma \ref{KUKANNOYATU3}. For these $\lambda$ and $\lambda'$, we have $K_{I_{\lambda}}\otimes K_{J_{\lambda'}}=K_{I_{\lambda} \cap J_{\lambda'}}=K_{I_{\lambda}}$. Therefore $V\otimes W \cong \bigoplus_{\lambda \in \Lambda, \lambda' \in \Lambda'} K_{I_{\lambda} \cap J_{\lambda'}}$ has the direct summand $\bigoplus_{\lambda \in \Lambda } K_{I_{\lambda}} \cong V$. 

In the similar way, we can show the case of $\phi$ is epi.
\end{proof}

\begin{lemma} \label{boundedinterval}
Let $V'$ and $V''$ be representations in $\Rfd(\VAZ)$. If these are decomposed into interval representations whose intervals are bounded, then for any exact sequence 
$$
0 \to V' \to V \to V'' \to 0,
$$
the representation $V$ is decomposed into interval representations whose intervals are bounded. That is, if $V'$ and $V''$ are of the form $\oplus_{j \in J} K_{I_j}$ for some collection $\{I_j\}_{j \in J}$ of bounded $I_i \subseteq \ZZ$, then $V$ is also of this form.   
\end{lemma}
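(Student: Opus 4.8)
The plan is to reduce the statement to a claim about bounded intervals and then argue by contradiction. First, since $V$ fits in an exact sequence of point-wise finite-dimensional representations it is itself point-wise finite-dimensional, so Theorem~\ref{proposition:allAreInterval} lets us write $V \cong \bigoplus_{\mu \in M} K_{L_\mu}$ for some family of intervals $L_\mu \subseteq \ZZ$. It therefore suffices to prove that every $L_\mu$ is bounded. Suppose not, and fix an unbounded $L := L_{\mu_0}$; write $\iota \colon K_L \hookrightarrow V$ for the inclusion of the corresponding direct summand, and let $f \colon V' \to V$ and $g \colon V \to V''$ be the maps of the given sequence.

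The key structural input is a description of subobjects of interval representations of $\VAZ$. Because every arrow of $\VAZ$ points the same way, a subrepresentation $W$ of an interval representation $K_I$ has $1$-dimensional stalks where it is nonzero and must be compatible with the identity structure maps; hence $\supp(W)$ is a final segment of $I$ and $W \cong K_{\supp(W)}$ is again an interval representation, and dually every quotient of $K_I$ is isomorphic to $K_J$ for some initial segment $J$ of $I$. Apply this to the composite $h := g \circ \iota \colon K_L \to V''$: both $\Ker(h) \subseteq K_L$ and $\operatorname{im}(h) \cong K_L/\Ker(h)$ are interval representations, say $\Ker(h) = K_N$ and $\operatorname{im}(h) = K_{N'}$ with $N$ a final segment of $L$ and $N' = L \setminus N$. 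Since $L$ is unbounded, at least one of $N$, $N'$ is unbounded.

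Now I conclude by cases. If $N'$ is unbounded, then $\operatorname{im}(h) = K_{N'} \hookrightarrow V''$ is a monomorphism from a single interval representation, so Lemma~\ref{KUKANNOYATU3} yields a summand $K_{I_j}$ of $V''$ with $N' \subseteq I_j$, forcing $I_j$ to be unbounded — contradicting the hypothesis on $V''$. If instead $N$ is unbounded, then by exactness $\iota(\Ker(h)) \subseteq \Ker(g) = \operatorname{im}(f)$, and composing the inclusion $\Ker(h) \hookrightarrow \operatorname{im}(f)$ with the isomorphism $\operatorname{im}(f) \cong V'$ induced by $f$ gives a monomorphism $K_N \hookrightarrow V'$; Lemma~\ref{KUKANNOYATU3} then gives a summand $K_{I'_k}$ of $V'$ with $N \subseteq I'_k$, forcing $I'_k$ to be unbounded — contradicting the hypothesis on $V'$. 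Either way we reach a contradiction, so every $L_\mu$ is bounded.

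I expect the step needing the most care to be the structural claim that sub- and quotient representations of an interval representation $K_I$ over $\VAZ$ are again interval representations supported on a final (respectively initial) segment of $I$; this relies on $\VAZ$ having one fixed orientation. It is precisely this fact that prevents the pathology in which the unbounded support of $\operatorname{im}(h)$ (or of $\Ker(h)$) is covered by infinitely many \emph{bounded} interval summands, so for a general type-$A$ orientation the argument would require an additional ingredient.
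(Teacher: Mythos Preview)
Your proof is correct. Both you and the paper argue by contradiction, assume an unbounded interval summand $K_L$ of $V$, and ultimately invoke Lemma~\ref{KUKANNOYATU3} to produce an unbounded interval inside $V'$ or $V''$. The route to that invocation differs. The paper writes $V\cong W\oplus K_I$ and shows, via a ``non-commuting square'' argument (the structure map $V'_a\to V'_{a+m}$ vanishes for $m$ large since the finitely many bounded intervals through $a$ all terminate), that the component $\psi\colon V'\to K_I$ is zero; from this it deduces that $g\colon K_I\to V''$ is injective and finishes. The unbounded-above and unbounded-below cases are handled separately (the latter dualises to show $g=0$ instead).

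You replace that analytic step with the structural observation that over $\VAZ$ every subrepresentation of $K_L$ is $K_N$ for a final segment $N\subseteq L$ and every quotient is $K_{N'}$ for the complementary initial segment. This lets you treat $h=g\circ\iota$ directly: one of $N,N'$ must be unbounded, and in either case Lemma~\ref{KUKANNOYATU3} yields the contradiction. Your argument is a bit cleaner in that it handles both unbounded-above and unbounded-below cases uniformly and avoids the bookkeeping of the splitting $V\cong W\oplus K_I$; the paper's argument, on the other hand, does not need to state or prove the sub/quotient classification of interval representations, relying instead on an elementary commutativity obstruction. Both approaches are specific to the equioriented quiver $\VAZ$, as you correctly note.
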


\begin{proof}
We prove this lemma by a contradiction. Note that $V$ is of the form $\oplus_{j \in J} K_{I_j}$ for some collection $\{I_j\}_{j \in J}$ of $I_i \subseteq \ZZ$ by Proposition~\ref{proposition:allAreInterval}. We will show that if some $I_i$ is unbounded, then there is a contradiction. Assume that there is an unbounded interval $I$ in $\VAZ$  such that the interval representation $K_I$ is a direct summand of $V$ and choose an isomorphism $V\cong W\oplus K_I$. Then we can rewrite the exact sequence by
$$
0 \to V' \xrightarrow{(\phi,\psi )} W\oplus K_I \xrightarrow{(f,g)} V'' \to 0
$$
Now, assume that $I$ is unbounded above. (We can also assume $I$ is unbounded below, but the proof is similar.)
Then, $\psi_a =0$ for every $a \in \ZZ$. This is because if there exists $a \in \ZZ$ such that  $\psi_a \neq 0$, we obtain non-commutative square
%% $$\qquad V'_a \  \xrightarrow{(\phi,\psi )_a} %%(W\oplus K_I)_a$$
%%   $$ \downarrow \qquad \qquad \qquad %%\downarrow$$
%%$$\quad \qquad V'_{a+m} \xrightarrow{(\phi,\psi %%)_{a+m}} (W\oplus K_L)_{a+m}$$
\[
  \xymatrix{
    V'_a \ar[r]_-{(\phi,\psi )_a} \ar[d]_{} & (W\oplus K_I)_a \ar[d] \\
    V'_{a{+}m} \ar[r]_{(\phi,\psi )_{a{+}m}} & ( W\oplus K_I)_{a+m}
  }
\]
where $m$ is large enough for $V'_a \to V'_{a+m}$ to be zero. Thus $\psi_a =0$ for all $a \in \ZZ$.
Then, since $0=$Im$(\psi)= \Ker(g) $, we have that $g: K_I \to V'' $ is injective. This implies $V''$ has a unbounded interval representation as a direct summand. However, the representation $V''$ is decomposed into interval representations whose intervals are bounded. It is a contradiction. Hence, we can conclude that $V$ is decomposed into interval representations whose intervals are bounded.
\end{proof}

\begin{proposition}
Let $\cI$ be the full subcategory of $\Rfd(\vec{A_{\ZZ}})$ whose representations are decomposed into bounded interval representations $\oplus_{j \in J} K_{I_j}$ for some collection $\{I_j\}_{j \in J}$ of bounded $I_i \subseteq \ZZ$. Then $\cI$ is a tensor ideal of $\Rfd(\vec{A_{\ZZ}})$.
\end{proposition}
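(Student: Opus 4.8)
The plan is to check directly the three clauses making $\cI$ a coherent subcategory (Definition~\ref{primeTensorIdeal}) and then the absorption property $\cI \otimes \Rfd(\VAZ) \subseteq \cI$. Throughout I would use Theorem~\ref{proposition:allAreInterval}: every object of $\Rfd(\VAZ)$ is a direct sum of interval representations, so membership $V \in \cI$ is literally the statement that all intervals occurring in the decomposition of $V$ are bounded. The zero representation is the empty direct sum and so lies in $\cI$, and closure under extensions is exactly Lemma~\ref{boundedinterval}.

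Next I would handle closure under kernels and cokernels. Let $f\colon V \to W$ be a morphism of $\cI$, with $V \cong \bigoplus_{j} K_{I_j}$ and $W \cong \bigoplus_{k} K_{I'_k}$ where all $I_j, I'_k$ are bounded. Both $\Ker(f)$ and $\Coker(f)$ are point-wise finite-dimensional, hence by Theorem~\ref{proposition:allAreInterval} decompose as $\bigoplus_m K_{L_m}$ and $\bigoplus_n K_{M_n}$ respectively. Since the inclusion $\Ker(f) \hookrightarrow V$ is a monomorphism, Lemma~\ref{KUKANNOYATU3} gives, for each $m$, an index $j$ with $L_m \subseteq I_j$; as $I_j$ is bounded, so is $L_m$, and therefore $\Ker(f) \in \cI$. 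Dually, the quotient map $W \twoheadrightarrow \Coker(f)$ is an epimorphism, so composing it with each coordinate projection $\Coker(f) \to K_{M_n}$ gives an epimorphism $W \to K_{M_n}$; the epimorphism case of Lemma~\ref{KUKANNOYATU2} then yields an index $k$ with $M_n \subseteq I'_k$, forcing $M_n$ bounded, so $\Coker(f) \in \cI$.

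Finally, for the absorption property I would take $V \in \cI$ and an arbitrary $M \in \Rfd(\VAZ)$, and write $V \cong \bigoplus_j K_{I_j}$ with each $I_j$ bounded and $M \cong \bigoplus_\lambda K_{J_\lambda}$. Then $M \otimes V \cong \bigoplus_{\lambda,j} K_{J_\lambda} \otimes K_{I_j} = \bigoplus_{\lambda,j} K_{J_\lambda \cap I_j}$, and each $J_\lambda \cap I_j$ is again an interval (a convex subset of $\ZZ$) contained in the bounded set $I_j$, hence bounded. Thus $M \otimes V \in \cI$, and combining the four points shows $\cI$ is a tensor ideal of $\Rfd(\VAZ)$.

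I do not expect a serious obstacle: the argument is bookkeeping with interval decompositions, and the substantive work is already packaged in Lemmas~\ref{KUKANNOYATU}--\ref{boundedinterval}. The only point demanding care is applying the epimorphism versions of Lemmas~\ref{KUKANNOYATU2} and~\ref{KUKANNOYATU3} in the right direction --- namely, that an interval summand of the \emph{target} of an epimorphism is contained in an interval summand of the \emph{source} --- so that boundedness is transported downward along the relevant direct-summand inclusions.
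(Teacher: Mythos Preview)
Your proposal is correct and follows essentially the same route as the paper: closure under extensions via Lemma~\ref{boundedinterval}, closure under kernels and cokernels via the interval-containment lemmas (Lemmas~\ref{KUKANNOYATU2}--\ref{KUKANNOYATU3}), and the absorption property via the identity $K_{J_\lambda}\otimes K_{I_j}=K_{J_\lambda\cap I_j}$. Your write-up is in fact more explicit than the paper's, which simply cites Lemma~\ref{KUKANNOYATU3} for the kernel/cokernel step; your care in unpacking the epimorphism direction (intervals of the target contained in intervals of the source) is well placed, since the epimorphism clause of Lemma~\ref{KUKANNOYATU3} as literally stated has the roles of $I_\lambda$ and $J_{\lambda'}$ swapped, and going back to Lemma~\ref{KUKANNOYATU2} as you do avoids any ambiguity.
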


\begin{proof}
We have already shown that $\cI$ is closed under extensions in Lemma \ref{boundedinterval}. Moreover $\cI$ is closed under kernels and cokernels by Lemma \ref{KUKANNOYATU3}. 

Next, we will show that the representation $V\otimes W$ is in $\cI$ for any $V \in \Rfd(\vec{A_{\ZZ}})$ and $W\in \cI$. Let $V\in \RAZ$ and $W\in \cI$ be isomorphic to the representations $\bigoplus_{\lambda \in \Lambda } K_{I_{\lambda}}$ and $\bigoplus_{\lambda' \in \Lambda' } K_{J_{\lambda'}}$ where $J_{\lambda'} \subseteq \ZZ$ is bounded for every $\lambda'\in \Lambda'.$  Then we obtain $(\bigoplus_{\lambda \in \Lambda } K_{I_{\lambda}}) \otimes (\bigoplus_{\lambda' \in \Lambda' } K_{J_{\lambda'}}) 
\cong \bigoplus_{\lambda \in \Lambda, \lambda' \in \lambda'} K_{I_{\lambda} \cap J_{\lambda'}}\in \cI$. 

We have shown that $\cI$ is a tensor ideal.
\end{proof}

\begin{definition}
Define the representation $K'_{\ZZ} \in \RAZ$  by
$$
(K'_{\ZZ})_{a} := K, \qquad
(K'_{\ZZ})_{\alpha} :=0, \  
$$
for every $a\in \ZZ$ and every arrow $\alpha$.
\end{definition}

\begin{lemma}
Let $S$ be the set $\{0, K'_{\ZZ} \}$ of objects of $\Rfd(\VAZ)$ and let $\langle S \rangle$ be the $\otimes$-ideal generated by $S$.
Then we have $K_{\ZZ} \notin\langle S \rangle$.
\end{lemma}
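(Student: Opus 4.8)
The plan is to find an invariant of representations in $\langle S \rangle$ that distinguishes $K_\ZZ$ from everything the construction produces, and the natural candidate here is the property "decomposes into bounded interval representations." I would argue that the generating set $S = \{0, K'_\ZZ\}$ already lies in the ideal $\cI$ of representations with only bounded summands (indeed $K'_\ZZ \cong \bigoplus_{a \in \ZZ} K_{\{a\}}$, a direct sum of one-point — hence bounded — intervals, and $0 \in \cI$ trivially), and then show that $\cI$ is closed under all three operations used to build $\langle S \rangle$ out of $S$: extensions, tensoring with arbitrary objects of $\Rfd(\VAZ)$, and passing to kernels and cokernels of maps between objects of the form $W \otimes V$. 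But these are exactly the closure properties established in the preceding proposition: $\cI$ is a tensor ideal of $\Rfd(\VAZ)$. Since $\langle S \rangle$ is by construction the \emph{smallest} tensor ideal containing $S$, and $\cI$ is a tensor ideal containing $S$, we get $\langle S \rangle \subseteq \cI$.

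To finish, I would observe that $K_\ZZ \notin \cI$: its unique interval decomposition is $K_\ZZ$ itself, and the interval $\ZZ$ is unbounded, so $K_\ZZ$ is not of the form $\bigoplus_{j \in J} K_{I_j}$ with all $I_j$ bounded (uniqueness of the interval decomposition, from Theorem~\ref{proposition:allAreInterval}, is what makes "$K_\ZZ$ has an unbounded summand" unambiguous). Combining $\langle S \rangle \subseteq \cI$ with $K_\ZZ \notin \cI$ gives $K_\ZZ \notin \langle S \rangle$.

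Concretely, the steps in order are: (1) verify $S \subseteq \cI$, which reduces to writing $K'_\ZZ$ as a direct sum of one-point interval representations; (2) invoke the previous proposition that $\cI$ is a tensor ideal; (3) invoke the universal property of $\langle S \rangle$ as the smallest tensor ideal containing $S$ to conclude $\langle S \rangle \subseteq \cI$; (4) check $K_\ZZ \notin \cI$ using uniqueness of interval decompositions.

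The only delicate point — and the place where the hypothesis $\AZ = \VAZ$ really enters — is step~(2), the verification that $\cI$ is closed under extensions; that is Lemma~\ref{boundedinterval}, whose proof crucially uses that along the equioriented quiver $\VAZ$ any map $V'_a \to V'_{a+m}$ from a bounded interval representation vanishes for $m$ large, forcing the relevant component of the extension map to be zero. Since that lemma and the tensor-ideal proposition are already available in the excerpt, the argument above is essentially just assembling them via the universal property, and I do not expect any further obstacle.
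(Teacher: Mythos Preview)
Your proposal is correct and follows essentially the same approach as the paper: both use the invariant ``decomposes into bounded interval representations'' and show it is preserved by the operations building $\langle S\rangle$. The only difference is packaging---you invoke the universal property of $\langle S\rangle$ together with the preceding proposition that $\cI$ is a tensor ideal, whereas the paper reproves the containment $S_n\subseteq\cI$ by an explicit induction on $n$; your version is slightly cleaner but logically equivalent.
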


\begin{proof}
We show $K_{\ZZ} \notin \langle S \rangle$ by a contradiction. Assume that $K_{\ZZ} \in \langle S \rangle$. Then there exists $n \in \ZZ_{\geq 0}$ such that $K_{\ZZ} \in S_n$ where $S_n$ is defined in Definition \ref{generatedideal}. By Lemma \ref{boundedinterval}, we can see that any representations of $S_0$ are decomposed into bounded interval representations. Since any representations of $S'_0$ and $S''_0$ are obtained by tensoring and taking kernels and cokernels, their representations are decomposed into bounded interval representations. Therefore any representation in $S_1$ are also decomposed into bounded interval representations. Inductively, every representation of $S_n$ are decomposed into bounded interval representations. Since $K_{\ZZ} \in S_n$, the interval representation  $K_{\ZZ}$ are decomposed into bounded interval representations. However, the representation $K_{\ZZ}$ is the indecomposable interval representation whose interval is infinite length. This is a contradiction. Hence we have  $K_{\ZZ} \notin \langle S \rangle$.
\end{proof}

\begin{proposition}\label{ExistMaxideal}
There exists a maximal $\otimes$-ideal in $\Rfd(\VAZ)$ that contains a representation whose support is $\ZZ$.
\end{proposition}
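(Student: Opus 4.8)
The plan is to obtain the desired ideal by a routine Zorn's lemma argument, using the previous lemma as the one nontrivial input. Consider the poset $\mathcal{P}$ of all proper $\otimes$-ideals of $\Rfd(\VAZ)$ that contain the $\otimes$-ideal $\langle S\rangle$ generated by $S=\{0,K'_{\ZZ}\}$, ordered by inclusion. The key preliminary observation is that a $\otimes$-ideal $\cM$ of $\Rfd(\VAZ)$ is proper if and only if the tensor unit $K_{\ZZ}=K_{\VAZ}$ does not lie in $\cM$: if $K_{\ZZ}\in\cM$ then $V\cong V\otimes K_{\ZZ}\in\cM$ for every $V$, so $\cM=\Rfd(\VAZ)$. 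By the previous lemma we have $K_{\ZZ}\notin\langle S\rangle$, so $\langle S\rangle$ is itself a proper $\otimes$-ideal; hence $\mathcal{P}$ is nonempty.

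Next I would check that every chain in $\mathcal{P}$ has an upper bound, namely its union. Let $(\cM_i)_{i\in I}$ be a chain of proper $\otimes$-ideals each containing $\langle S\rangle$, and set $\cM=\bigcup_i\cM_i$. All the closure conditions defining a coherent $\otimes$-ideal are finitary: an extension $0\to V'\to V\to V''\to 0$ or a morphism $f\colon V\to W$ involves only finitely many objects, and closure under $-\otimes(-)$ with an arbitrary object of $\Rfd(\VAZ)$ involves a single object of the ideal; since the chain is totally ordered, whenever the relevant objects lie in $\cM$ they already lie in a common $\cM_i$, which then contains the output ($V$, or $\Ker f$ and $\Coker f$, or the tensor product). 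Thus $\cM$ is a $\otimes$-ideal. It contains $\langle S\rangle$, and it is proper because $K_{\ZZ}$ lies in no $\cM_i$ and hence not in $\cM$. So $\cM\in\mathcal{P}$ is an upper bound of the chain.

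By Zorn's lemma $\mathcal{P}$ has a maximal element $\cM$. This $\cM$ is in fact a maximal $\otimes$-ideal of $\Rfd(\VAZ)$: any proper $\otimes$-ideal strictly containing $\cM$ would still contain $\langle S\rangle$, hence lie in $\mathcal{P}$, contradicting maximality of $\cM$ there. Finally, $\cM\supseteq\langle S\rangle\ni K'_{\ZZ}$ and $\supp(K'_{\ZZ})=\ZZ$, so $\cM$ is a maximal $\otimes$-ideal containing a representation whose support is all of $\ZZ$, as claimed. The main obstacle in this argument has already been cleared by the previous lemma ($K_{\ZZ}\notin\langle S\rangle$), which is what guarantees that $\langle S\rangle$ is proper; everything else is the standard "finitary conditions pass to unions of chains" bookkeeping together with the identification of properness with not containing the tensor unit.
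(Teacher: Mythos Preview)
Your argument is correct and follows essentially the same approach as the paper: apply Zorn's lemma to the poset of proper $\otimes$-ideals containing $\langle\{0,K'_{\ZZ}\}\rangle$, using the previous lemma to ensure this poset is nonempty. Your write-up is in fact more careful than the paper's, since you spell out why the union of a chain is again a $\otimes$-ideal and why the resulting maximal element of $\mathcal{P}$ is a maximal $\otimes$-ideal of the whole category.
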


\begin{proof}
Let $S$ be the set $\{0, K'_{\ZZ} \}$ and $\langle S \rangle$ be the ideal generated by $S$. Define a partially ordered set $(\Sigma,\subseteq):=\{ \text{ proper ideals containing $\langle S \rangle$ } \}$. Then $\Sigma$ is not an empty set because $\langle S \rangle \in \Sigma$. Let $\sigma \subseteq \Sigma$ be a totally ordered subset. Then $ \bigcup_{\mathcal{I}\in \sigma}\mathcal{I} $ is a proper ideal (it does not contain $K_\ZZ$) and upper bound of $\sigma$. Thus by Zorn's lemma, there exists a maximal ideal containing $K'_{\ZZ}\in \langle S \rangle$ whose support are $\ZZ$. \end{proof}

\begin{lemma}\label{MAXisPRIME}
%Suppose $\cA$ is tensor abelian category such that for every object $A$, the functor $A \otimes -$ sends short exacts sequences to short exact sequences. Then every maximal $\otimes$-ideal is prime.
Let $\cM$ be a maximal $\otimes$-ideal of $\cA = \Rfd(\vec{A_{\ZZ}})$. Then, the maximal $\otimes$-ideal is a prime $\otimes$-ideal. 
\end{lemma}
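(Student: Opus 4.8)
The plan is to run the standard argument that a \emph{maximal} tensor ideal is prime, the only category-specific ingredient being that $-\otimes B$ is an exact functor on $\cA=\Rfd(\VAZ)$ for every object $B$: tensor products are computed vertexwise over the field $K$, and a sequence of representations is exact precisely when it is exact at every vertex, so exactness of $-\otimes B$ reduces to exactness of $-\otimes_K B_a$ on $K$-vector spaces. So suppose $A\otimes B\in\cM$ while $A\notin\cM$; I must deduce $B\in\cM$. First I would form the tensor ideal $\langle \cM\cup\{A\}\rangle$ generated by the objects of $\cM$ together with $A$ in the sense of Definition~\ref{generatedideal} (this is legitimate since $\cM$, being a coherent tensor ideal, contains the zero object). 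It contains $\cM$ strictly because $A\notin\cM$, so by maximality of $\cM$ it cannot be a proper ideal; hence $\langle\cM\cup\{A\}\rangle=\cA$, and in particular the unit object $K_{\VAZ}$ lies in it.

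Next I would introduce the full subcategory $T$ of all $X\in\cA$ with $X\otimes B\in\cM$, and show $\langle\cM\cup\{A\}\rangle\subseteq T$ by induction along the chain $S_{-1}\subseteq S_0\subseteq S_0'\subseteq S_0''\subseteq S_1\subseteq\cdots$ built in Definition~\ref{generatedideal} from $S=\cM\cup\{A\}$. For the base case $S_{-1}\subseteq T$: if $m\in\cM$ then $m\otimes B\in\cM$ because $\cM$ is a tensor ideal, and $A\otimes B\in\cM$ by hypothesis. For the inductive step I would check that $T$ is stable under each of the three operations used to pass from one stage to the next: under extensions, because $-\otimes B$ carries a short exact sequence $0\to V'\to V\to V''\to 0$ to a short exact sequence and $\cM$ is closed under extensions; under tensoring with an arbitrary $W\in\cA$, because $(W\otimes X)\otimes B\cong W\otimes(X\otimes B)$ and $\cM$ is a tensor ideal; and under kernels and cokernels, because the exact functor $-\otimes B$ satisfies $\Ker(f)\otimes B\cong\Ker(f\otimes B)$ and $\Coker(f)\otimes B\cong\Coker(f\otimes B)$, while $\cM$ is closed under kernels and cokernels. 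Hence every $S_n$ is contained in $T$, so $\langle\cM\cup\{A\}\rangle=\bigcup_n S_n\subseteq T$.

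Finally, combining the two steps, $\cA=\langle\cM\cup\{A\}\rangle\subseteq T$, so $K_{\VAZ}\in T$, which by definition of $T$ means $B\cong K_{\VAZ}\otimes B\in\cM$. This is exactly the statement that $A\otimes B\in\cM$ forces $A\in\cM$ or $B\in\cM$, so $\cM$ is prime. I do not expect a genuine obstacle here; the whole argument is formal once one has the exactness of $-\otimes B$, which is what makes the inductive stability of $T$ work, and the only real labour is the (routine) verification of the three closure clauses against the construction in Definition~\ref{generatedideal}.
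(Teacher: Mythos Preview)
Your argument is correct and follows the same overall strategy as the paper: both introduce the ``transporter'' subcategory $T=\{X:X\otimes B\in\cM\}$ (the paper calls it $\mathcal{I}$ and writes $W'$ for your $B$), show it is a $\otimes$-ideal strictly containing $\cM$, and conclude from maximality that the unit lies in $T$, whence $B\in\cM$. The treatment of extensions and of closure under tensoring is identical in substance.

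The one genuine difference is in the kernel/cokernel step. You argue directly from exactness of $-\otimes B$: if $V,W\in T$ and $f:V\to W$, then $f\otimes B$ is a morphism in $\cM$, so $\Ker(f\otimes B)\in\cM$, and exactness gives $\Ker(f)\otimes B\cong\Ker(f\otimes B)$. The paper instead invokes its Corollary~\ref{DirectsumTensor}: from the monomorphism $\Ker(f)\otimes W'\hookrightarrow V\otimes W'$ it deduces that $\Ker(f)\otimes W'$ is a direct summand of $(\Ker(f)\otimes W')\otimes(V\otimes W')\in\cM$, hence lies in $\cM$. That corollary in turn rests on the chain of interval-decomposition lemmas (Lemmas~\ref{KUKANNOYATU}--\ref{KUKANNOYATU3}) specific to $\Rfd(\VAZ)$. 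Your route is shorter and uses only that the tensor product is computed vertexwise over a field, so it would work verbatim in any tensor abelian category with an exact tensor product; the paper's route, while correct, ties the argument to the combinatorics of interval representations that were developed for other purposes in that subsection.
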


\begin{proof}
Let $\cM$ be a maximal $\otimes$-ideal and suppose that $W\otimes W' \in \cM$. We will show that if $W\notin  \cM$ then $W' \in  \cM$. Consider the set 
$$\mathcal{I}:=\{V \in \cA : V\otimes W' \in \cM \}.$$
Then $W\in \mathcal{I}$ and $\cM \subseteq \mathcal{I}$ hold. Since we have assumed $W\notin  \cM$, the set $\mathcal{I}$ is strictly larger than $\cM$. If we show $\mathcal{I}$ is a $\otimes$-ideal, then the tensor unit $I$ is in $\cA = \mathcal{I}$ by the maximality of $\cM$, and therefore, we can show $W'= I \otimes W' \in \cM.$ So it is enough to show that $\mathcal{I}$ is a $\otimes$-ideal to prove $\cM$ is a prime $\otimes$-ideal.

First, we will show that $\mathcal{I}$ is closed under extensions. Let $V'$ and $V''$ be in $\mathcal{I}$. For any short exact sequence
$$0 \to V' \to V \to V'' \to 0,
$$
there exists an exact sequence
$$0 \to V'\otimes W' \to V\otimes W' \to V'' \otimes W' \to 0
$$
such that $V'\otimes W'$ and $V''\otimes W'$ are in the $\otimes$-ideal $\cM.$ Thus, the representation $V\otimes W'$ is in $\cM$ and therefore $V$ is in $\mathcal{I}$.

Next, we will show that $\mathcal{I}$ is closed under multiplication. Let $V$ be in $\cA$ and $V'$ be in $\mathcal{I}$. We have $(V\otimes V')\otimes W' = V\otimes (V'\otimes W') \in \cM$ and therefore $(V\otimes V') \in \mathcal{I}.$

Then, we will show that $\mathcal{I}$ is closed under kernels and cokernels. Let $f:V\to V'$ be in $\mathcal{I}$. Then, we have the monomorphism $ ker(f)\otimes id_{W'} :\Ker(f)\otimes W' \hookrightarrow   V\otimes W' $ such that $V\otimes W'$ is in $\cM$. Thus  $(\Ker(f)\otimes W')\otimes(V\otimes W')$ is in $\cM$. By Lemma \ref{DirectsumTensor}, we have  $\Ker(f)\otimes W'$ as a direct summand of $(\Ker(f)\otimes W')\otimes(V\otimes W')$ and since $\cM$ is a $\otimes$-ideal, the representation $\Ker(f)\otimes W'$ is in $\cM$. Hence $\Ker(f)$ is in $\mathcal{I}$ and therefore $\mathcal{I}$ is closed under kernels. Similarly we can show that $\mathcal{I}$ is closed under cokernels. 

We have showed $\mathcal{I}$ is a $\otimes$-ideal. 
\end{proof}

Next corollary is what we wanted.

\begin{corollary}\label{CounterExample}
There exists a prime $\otimes$-ideal of $\Rfd(\vec{A_{\ZZ}})$ which contains a representation whose support is $\ZZ$.

Thus the map 
\begin{align*}
    \psi:  \Spc(\cP({\ZZ})) 
&\to \Spc(\Rfd(A_\ZZ)), \\
\cQ 
&\mapsto \{V\in \Rfd(A_\ZZ) : \supp(V)\in \cQ \}
\end{align*}
constructed in Proposition \ref{lemma:31} is not surjective.
\end{corollary}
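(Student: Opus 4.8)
The plan is to combine Proposition~\ref{ExistMaxideal} with Lemma~\ref{MAXisPRIME}. First I would invoke Proposition~\ref{ExistMaxideal} to produce a maximal $\otimes$-ideal $\cM$ of $\Rfd(\VAZ)$ containing the representation $K'_{\ZZ}$; note that $\supp(K'_{\ZZ}) = \ZZ$, since $(K'_{\ZZ})_a = K \neq 0$ at every vertex $a$. Then Lemma~\ref{MAXisPRIME} tells us that this $\cM$ is in fact a prime $\otimes$-ideal. This already proves the first sentence of the corollary: $\cM$ is a prime $\otimes$-ideal containing a representation, namely $K'_{\ZZ}$, whose support is all of $\ZZ$.

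For the non-surjectivity of $\psi$ I would argue by contradiction. Every element in the image of $\psi$ has the form $\psi(\cQ) = \{V \in \Rfd(\VAZ) : \supp(V) \in \cQ\}$ for a prime ideal $\cQ \in \Spc(\cP(\ZZ))$, which in particular is a \emph{proper} ideal of the Boolean algebra $\cP(\ZZ)$. If $\cM$ equalled some such $\psi(\cQ)$, then from $K'_{\ZZ} \in \cM$ we would get $\ZZ = \supp(K'_{\ZZ}) \in \cQ$; but $\ZZ$ is the top element of $\cP(\ZZ)$, so the downward-closure axiom for ideals of a Boolean algebra forces $\cQ = \cP(\ZZ)$, contradicting properness. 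Hence $\cM$ is not in the image of $\psi$, and $\psi$ is not surjective. (One should first note that $\psi$ is indeed well defined on $\Spc(\cP(\ZZ))$ for $\VAZ$: the verification in the proof of Proposition~\ref{lemma:31} that $\psi(\cQ)$ is a prime $\otimes$-ideal only uses the behaviour of supports under kernels, cokernels, extensions, and tensor products, none of which needs a bound on path lengths.)

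I do not anticipate any real obstacle, since the substantive work has already been carried out: Proposition~\ref{ExistMaxideal} supplies, via Zorn's lemma, a proper maximal $\otimes$-ideal containing $\langle \{0, K'_{\ZZ}\} \rangle$ yet avoiding $K_{\ZZ}$, and Lemma~\ref{MAXisPRIME} upgrades maximality to primality. The only point requiring a moment's care is recalling that prime ideals of a Boolean algebra are proper by definition --- precisely the feature separating $\cM$ from every member of the image of $\psi$. If desired, one may also remark that $\psi$ remains injective over $\VAZ$, because the identity $\phi \circ \psi = \mathrm{id}$ from the proof of Proposition~\ref{lemma:31} uses only that every subset of $\ZZ$ occurs as a support.
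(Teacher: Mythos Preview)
Your proposal is correct and follows essentially the same route as the paper: invoke Proposition~\ref{ExistMaxideal} to obtain a maximal $\otimes$-ideal containing $K'_{\ZZ}$ (whose support is $\ZZ$), then apply Lemma~\ref{MAXisPRIME} to upgrade it to a prime $\otimes$-ideal. The paper's own proof stops there and leaves the non-surjectivity of $\psi$ implicit in the word ``Thus''; your explicit argument that $\ZZ \in \cQ$ would force $\cQ = \cP(\ZZ)$, contradicting properness, is the natural way to spell this out and is a welcome addition.
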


\begin{proof}
By Proposition \ref{ExistMaxideal}, there exists a maximal $\otimes$-ideal of $\Rfd(\vec{A_{\ZZ}})$ that contains a representation whose support is $\ZZ$ and the maximal $\otimes$-ideal is prime $\otimes$-ideal by Corollary \ref{MAXisPRIME}.  
\end{proof}

%%===========================================================================================%%
%% If you are submitting to one of the Nature Portfolio journals, using the eJP submission   %%
%% system, please include the references within the manuscript file itself. You may do this  %%
%% by copying the reference list from your .bbl file, paste it into the main manuscript .tex %%
%% file, and delete the associated \verb+\bibliography+ commands.                            %%
%%===========================================================================================%%

\bibliographystyle{unsrt} 
\bibliography{main.bib}
\end{document}